\newcommand{\tpitchfork}{%
  \vbox{
    \baselineskip\z@skip
    \lineskip-.52ex
    \lineskiplimit\maxdimen
    \m@th
    \ialign{##\crcr\hidewidth\smash{$-$}\hidewidth\crcr$\pitchfork$\crcr}
  }%
}
\newtheorem*{theorema}{Theorem A}
\newtheorem*{theoremb}{Theorem B}
\newtheorem*{theoremA4.1}{the Main Theorem3.1}
\newtheorem*{theoremc}{Theorem~C}
\newtheorem{prop}{Proposition}[section]
\newtheorem{lemma}[prop]{Lemma}
\newtheorem{theorem}[prop]{Theorem}
\numberwithin{equation}{section}
\author{Yoshitaka Saiki, Hiroki Takahasi, and James A. Yorke}
\address{Graduate School of Business Administration, Hitotsubashi University, Tokyo,
186-8601, JAPAN} 
\email{yoshi.saiki@r.hit-u.ac.jp}
\address{Keio Institute of Pure and Applied Sciences (KiPAS), Department of Mathematics,
Keio University, Yokohama,
223-8522, JAPAN} 
\email{hiroki@math.keio.ac.jp}
\address{Institute for Physical Science and Technology and Mathematics and Physics Departments,
University of Maryland College Park, MD 20742, USA} 
\email{yorke@umd.edu}
\subjclass[2020]{37C29, 37C45, 37G25} 
\thanks{{\it Keywords}: 
 horseshoe map, Cantor sets, thickness, Hausdorff dimension, heterodimensional cycles}
\begin{document}
\title[Hausdorff dimension of Cantor intersections]
{Hausdorff dimension of Cantor 
intersections\\
and robust heterodimensional cycles for\\
heterochaos horseshoe maps}
\maketitle

   \begin{abstract}
   As a model to provide a hands-on, elementary understanding of chaotic dynamics in dimension three,
we introduce a $C^2$-open set of diffeomorphisms of 
  $\mathbb R^3$ having two horseshoes with different dimensions of instability. We prove that: 
 the unstable set of one horseshoe and the stable set of the other are of Hausdorff dimension nearly $2$ 
  whose cross sections are Cantor sets;
  the intersection of the unstable and stable sets contains a fractal set of Hausdorff dimension nearly $1$. As a corollary we detect $C^2$-robust heterodimensional cycles.
  Our proof employs
  the theory of normally hyperbolic invariant manifolds
  and the thicknesses of Cantor sets. 
  \end{abstract}

\section{Introduction}
The fractal theory of hyperbolic
sets (horseshoes) for surface diffeomorphisms is a well-developed topic, 
 see for instance \cite{McMan83,PalVia88,Tak86}. Fractal quantities such as the Hausdorff dimension and limit capacity,
have been brought into the bifurcation theory of surface diffeomorphisms and played an important role. 
Newhouse \cite{New70} defined a non-negative quantity called
the ``thickness'' of a Cantor set on the real line, in order to formulate conditions which guarantee that two Cantor sets intersect each other.
These conditions have been applied to 
surface diffeomorphisms 
 to show the robustness of tangencies between unstable and stable manifolds
 whose cross sections are Cantor sets \cite{New70,New74,New79,PalTak93,Rob83}.
For extensions of these results to higher dimensions, see
\cite{GTS93,PalVia,Rob83,Rom95}.

A fundamental property of surface diffeomorphisms is that
all non-trivial hyperbolic sets have index one (the dimension of the unstable subbundle). 
In higher dimension,
the situation is rather more complicated when hyperbolic sets have different
indices and different horseshoes with different indices are  cyclically related. 
The question of getting sufficient conditions for robust dynamical phenomena,
including tangencies, heterodimensional cycles, is an active field of research.

The present paper aims to answer this
sort of question on the Hausdorff dimension of heteroclinic intersections for certain intermingled horseshoes. 
We provide an elementary example of diffeomorphisms in dimension three that display a new type of robust fractal non-transverse intersection
 between unstable and stable manifolds. This example is a variation of
 the piecewise affine map $F$ on the cube $[0,1]^3$ in the Euclidean space $\mathbb R^3=\{(x_u,x_c,x_s)\colon x_u,x_c,x_s\in\mathbb R\}$ introduced in \cite{STY20}, as a model to provide a hands-on, elementary understanding of complicated dynamics in dimension three.
 Define $h\colon[0,1]\to[0,1]$ by $h(x)=3x\mod1$ for $x\in[0,1)$ and $h(1)=1$.
\begin{figure}
    \centering
    \includegraphics[width=0.65\columnwidth]{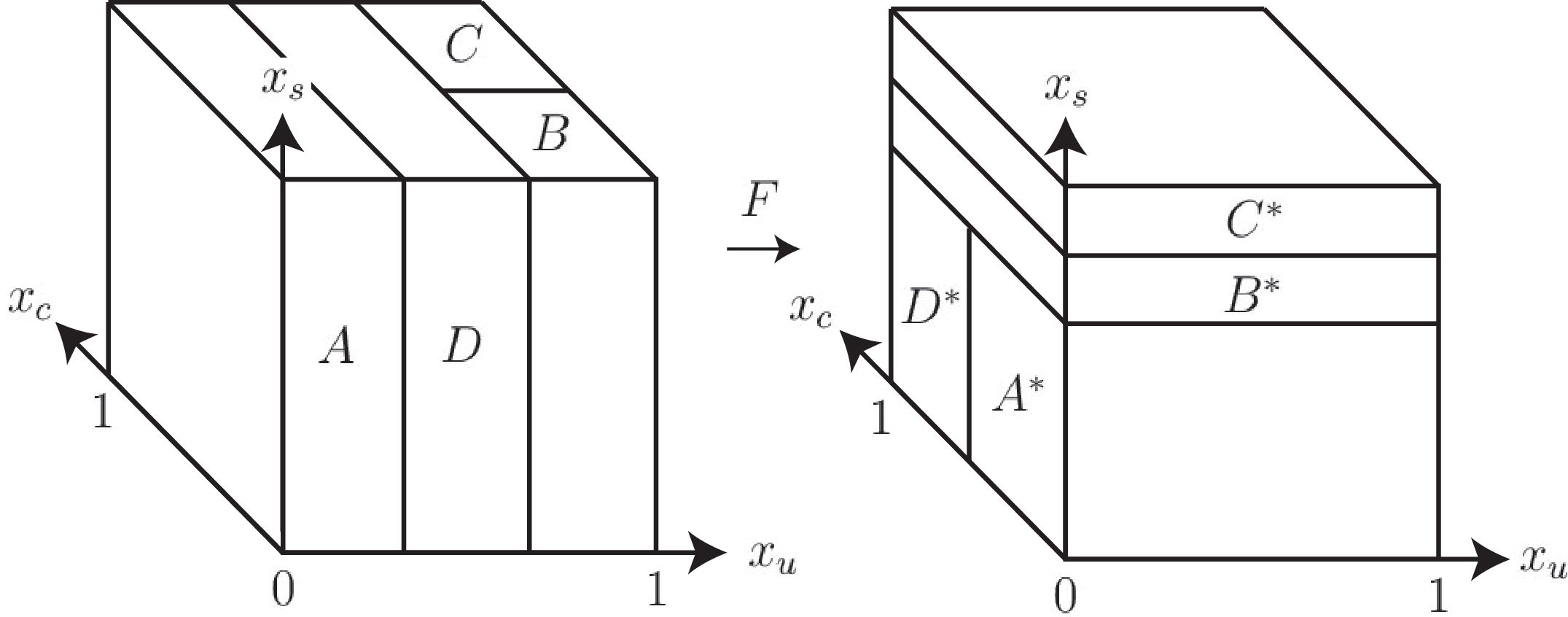}
    \caption{The piecewise affine map $F$. $A^*=F(A)$,
    $B^*=F(B)$, $C^*=F(C)$, $D^*=F(D)$.}
    \label{fig:heterochaos-baker3d}
\end{figure}
The map $F\colon[0,1]^3\to[0,1]^3$ is given by 
\[  F(x_u,x_c,x_s)=
  \begin{cases}
  \vspace{1mm}
    \displaystyle{\left(h(x_u),~~~~~\frac{1}{2}x_c,~~~~~~~\frac{2}{3}x_s\right)}&\text{on}~A, \\
    \vspace{1mm}
     \displaystyle{\left(h(x_u),2x_c,~~~~~~~\frac{1}{6}x_s+\frac{2}{3}\right)}&\text{on}~B, \\
     \vspace{1mm}
   \displaystyle{\left (h(x_u),2x_c-1,\frac{1}{6}x_s+\frac{5}{6}\right)}&\text{on}~C, \\
    \displaystyle{\left(h(x_u),~~~~~\frac{1}{2}x_c+\frac{1}{2},~~\frac{2}{3}x_s\right)}&\text{on}~D,
  \end{cases}\]
where 
\[\begin{split}
A =& \left[0,\frac{1}{3}\right)\times
\left[0,1\right]\times\left[0,1\right],\\
B =&\left [\frac{2}{3},1\right]\times
\left[0,\frac{1}{2}\right)\times\left[0,1\right],\\
C =& \left[\frac{2}{3},1\right]\times
\left[\frac{1}{2},1\right]\times\left[0,1\right],\\
D =&\left[\frac{1}{3},\frac{2}{3}\right)
\times\left[0,1\right]\times\left[0,1\right].
\end{split}\]
See FIGURE~\ref{fig:heterochaos-baker3d}.
 The map $F$ 
 preserves the Lebesgue measure on $[0,1]^3$, and has
a pair of closed invariant sets  
$\bigcap_{n=-\infty}^{\infty}F^n(A\cup D)$ and
$\bigcap_{n=-\infty}^{\infty}F^n(B\cup C)$ of indices $1$ and $2$ respectively. 
The intersection of the unstable set of the first set and the stable set of the second set contains the segment $\{(1,x_c,0)\colon 0\leq x_c\leq1\}$, which means that the Hausdorff dimension of their heteroclinic intersection is at least $1$.
The following dynamical properties of $F$ were proved in \cite{STY20}:
\begin{itemize}\item[$\circ$] $F$ is transitive, i.e., has a dense orbit in $[0,1]^3$.

\item[$\circ$] The set of hyperbolic periodic points with index $1$ is dense in $[0,1]^3$.

\item[$\circ$] The set of hyperbolic periodic points with index $2$ is dense in $[0,1]^3$.

\item[$\circ$] $F$ is weak mixing with respect to the Lebesgue measure.
\end{itemize}
In this paper we modify $F$ into a diffeomorphism, and
investigate the Hausdorff dimension of the intersection of the invariant manifolds of two intermingled horseshoes, allowing $C^2$ perturbations.
As a corollary we obtain a new type of $C^2$-robust heterodimensional cycles.

\subsection{Heterochaos horseshoe maps}\label{cou-sec}
We begin with basic definitions to introduce our diffeomorphisms.
A {\it block} is a Cartesian product of three non-degenerate compact  intervals in $\mathbb R^3$ all of whose sides are parallel to one of the axes of coordinates of $\mathbb R^3$.
Given two blocks $X$ and $Y$, we say $X$ {\it stretches across} $Y$
if $X$ does not intersect the edges of $Y$ and $X\setminus Y$ has two connected components.

\begin{figure}
\centering
\includegraphics[width=.9\textwidth]{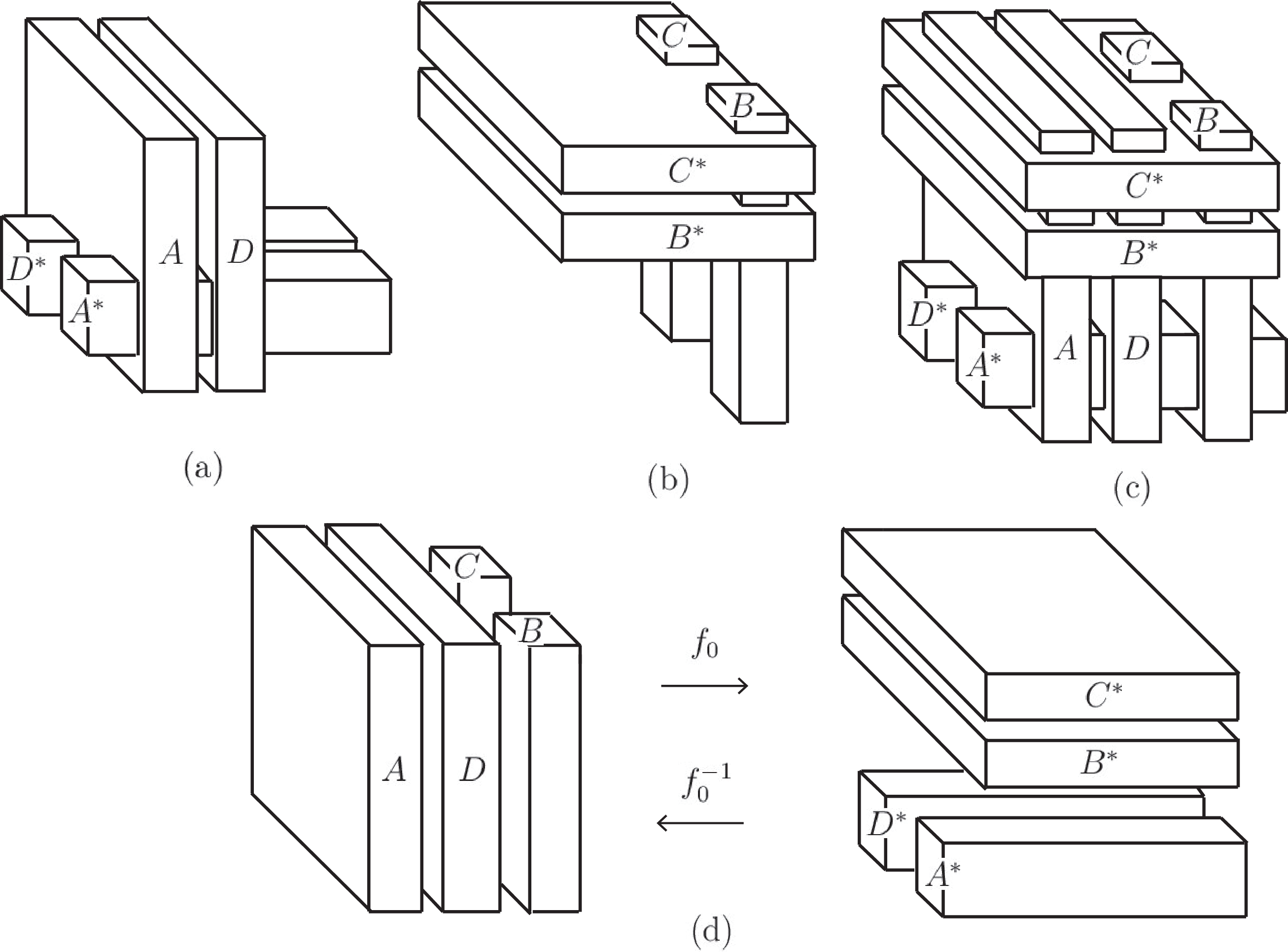}
\caption{The heterochaos horseshoe map $f_0$.}
\label{fig:coupledhorseshoe}
\end{figure}

 Let $A$, $B$, $C$, $D$ be pairwise disjoint blocks
in $[0,1]^3$ such that
 $D$ is a translate of $A$ in the $x_u$-direction and
 $C$ is a translate of $B$
 in the $x_c$-direction. Similarly,
 let $A^*$, $B^*$, $C^*$, $D^*$
be pairwise disjoint 
blocks in $[0,1]^3$
such that $D^*$ is a translate of $A^*$ in the $x_c$-direction and
 $C^*$ is a translate of $B^*$
 in the $x_s$-direction.
We assume:

\begin{itemize}
 \item[(i)] $A^*$  stretches across $A$ and $D$, and $D^*$ stretches across $A$ and $D$.

\item[(ii)] $B$ stretches across $B^*$ and $C^*$, and $C$ stretches across $B^*$ and $C^*$.

\item[(iii)] 
$A^*$ stretches across $B$, and $D^*$ stretches across $C$.

\item[(iv)] $A$ stretches across $B^*$ and $C^*$, and
$D$ stretches across $B^*$ and $C^*$.
\end{itemize}
\noindent See FIGURE~\ref{fig:coupledhorseshoe}(a), (b), (c), (c) respectively. 
Set \[R_1=A\cup D\quad\text{and}\quad R_2= B^*\cup C^*.\]

For an integer $r\geq1$,  
let ${\rm Diff}^r(\mathbb R^3)$ denote the space of $C^r$ diffeomorphisms of $\mathbb R^3$
endowed with the $C^r$ compact open topology.
We say $f_0\in{\rm Diff}^1(\mathbb R^3)$ is
a {\it heterochaos horseshoe map}
if it maps $A$, $B$, $C$, $D$ affinely to $A^*,B^*,C^*,D^*$ respectively
with diagonal Jacobian matrices, see FIGURE~\ref{fig:coupledhorseshoe}(d).
Condition
(i) implies that the restriction
$f_0|_{R_1}$ is a horseshoe map
whose unstable manifolds are one-dimensional. Condition
 (ii) implies that
$f_0|_{R_2}$
is a horseshoe map  whose stable manifolds are one-dimensional.
Conditions (iii) (iv) determine how these two horseshoe maps are coupled.

\subsection{Statements of results}
Let $f_0$ be a 
heterochaos horseshoe map. Let $f\in{\rm Diff}^1(\mathbb R^3)$
be sufficiently $C^1$-close to $f_0$.
The compact $f$-invariant set
\[\Lambda=\bigcap_{n=-\infty}^{\infty} f^{-n}(R_1\cup R_2)\]
contains two closed $f$-invariant sets
$\Gamma=\Gamma(f)$ and $\Sigma=\Sigma(f)$ given by
\[\Gamma=\bigcap_{n=-\infty}^{\infty} f^{-n}(R_1)\quad\text{and}\quad 
\Sigma=\bigcap_{n=-\infty}^{\infty} f^{-n}(R_2),\]
which are hyperbolic sets of indices $1$ and $2$ respectively.
Extending the arguments in \cite{STY20} it is possible to show the transitivity, as well as the density of hyperbolic periodic points with different indices in $\Lambda$.
 If $f$ is $C^2$, the Lebesgue measure of $\Lambda$ is zero.

Let $W^u(\Gamma)$ (resp. $W^s(\Sigma)$)
denote the union of the unstable (resp. stable) manifolds of points in $\Gamma$ (resp. $\Sigma)$.

We will provide a condition on $f_0$ which ensures that
 the set 
\[\mathscr{H}=W^u(\Gamma)\cap W^s(\Sigma)\cap\Lambda\]
is non-empty for $f$ which is sufficiently $C^2$-close to $f_0$. Under a stronger condition on $f_0$, we will give an estimate of the Hausdorff dimension of
$\mathscr{H}$.

To these ends, we will define thicknesses of cross sections of $W^u(\Gamma)$
 and $W^s(\Sigma)$, and relate them to 
the following numbers
\[a_1=\frac{|A_c^*|-\kappa_1|A_c^*| }{|E_c^*|-2|A_c^*|+\kappa_1|A_c^*|
 }\quad\text{and}\quad a_2=\frac{|B_c|-\kappa_2|B_c| }{|F_c|-2|B_c|+\kappa_2|B_c|},\]
where
\[\kappa_1=\frac{|A_c|-|E_c^*|}{|A_c|-|A_c^*|}\quad\text{and}\quad \kappa_2=\frac{|B_c^*|-|F_c|}{|B_c^*|-|B_c|}.\]
Hereafter,
 $E^*$ denotes the minimal block containing $A^*\cup D^*$, 
 and $F$ denotes the minimal block containing $B\cup C$.
 For a block $X$
we write $X=X_u\times X_c\times X_s$.
We denote by $|I|$ the Euclidean length of a bounded interval $I\subset\mathbb R$.
Note that $\kappa_1,\kappa_2\in(0,1)$, and so $a_1,a_2>0$.

Let $\dim_{\rm H}$ denote the Hausdorff dimension on $\mathbb R^3$. 
The key term {\it dimension-reducible} will be defined in Section~\ref{constants-s}.
Our main results are stated as follows.

\begin{theorema}
\label{cycle-thm} 
Let $f_0\in{\rm Diff}^2(\mathbb R^3)$ be a heterochaos horseshoe map that is dimension-reducible 
and satisfies $a_1a_2>1$.
There exists a $C^2$ neighborhood $\mathcal V$ of $f_0$  such that
for any $f\in\mathcal V$, $\mathscr{H}$ is a non-empty totally disconnected set satisfying
\[\dim_{\rm H} \mathscr{H}<1.\]
Moreover we have
\[\begin{split}&
\frac{\log2}{\log\left(2+2/a_1\right)}+1<\dim_{\rm H} W^u(\Gamma)< 2,\quad
\frac{\log2}{\log\left(2+2/a_2\right)}+1<\dim_{\rm H} W^s(\Sigma)< 2.\end{split}\]
\end{theorema}
 
  The sets $W^u(\Gamma)$ and $W^s(\Sigma)$ are contained in locally invariant $C^2$ surfaces.
We believe it is possible to characterize their Hausdorff dimensions
as zeros of appropriately defined pressure functions as in \cite{McMan83}.
For this type of results in dimension three, we refer the reader to \cite{DGGJ19,SS99}. 
  
  The sets $W^u(\Gamma)$ and $W^s(\Sigma)$
 intersect each other as depicted in FIGURE~\ref{fig:heteroclinic-intersection}. All their intersections are quasi-transverse, while
  $W^s(\Gamma)$ intersects $W^u(\Sigma)$  transversely. Therefore, the two hyperbolic sets $\Gamma$ and $\Sigma$ are cyclically related. 
 This situation leads us to 
 state an immediate dynamical application of Theorem~A.
 We say $f\in{\rm Diff}^r(\mathbb R^3)$ $(r\geq1)$ has a {\it heterodimensional cycle associated to its transitive hyperbolic sets $\Psi$ and $\Upsilon$} if the indices of  $\Psi$ and $\Upsilon$ are different, and
 $W^s(\Psi)\cap W^u(\Upsilon)\neq\emptyset$ and $W^s(\Upsilon)\cap W^u(\Psi)\neq\emptyset$.
 Since non-transverse intersections in heterodimensional cycles
 can easily be destroyed by perturbations of diffeomorphisms, the following notion is useful \cite{BonDia08}.
A heterodimensional cycle of $f$ associated to transitive hyperbolic sets $\Psi$ and $\Upsilon$ is {\it $C^r$-robust}
if there is a $C^r$ neighborhood $\mathcal V$ of $f$
such that any diffeomorphism in $\mathcal V$
has a heterodimensional cycle associated to the 
continuations of $\Psi$ and $\Upsilon$.
By Kupka-Smale's theorem, if $f$ has a $C^r$-robust heterodimensional cycle associated to $\Psi$ and $\Upsilon$, either $\Psi$ or $\Upsilon$ is a non-trivial hyperbolic set.

Robust heterodimensional cycles are often constructed by perturbations. It may be useful to exhibit a concrete example of a diffeomorphism
which has a robust heterodimensional cycle. As a corollary to Theorem~A we obtain the following.

 \begin{theoremb}\label{hetero-cor}
 Let $f_0\in{\rm Diff}^2(\mathbb R^3)$ be a heterochaos horseshoe map that is dimension-reducible 
and satisfies $a_1a_2>1$.
  Then $f_0$ has a $C^2$-robust heterodimensional cycle
 associated to $\Gamma$ and $\Sigma$.
\end{theoremb}

The $C^2$-robust intersection between $W^u(\Gamma)$ and $W^s(\Sigma)$ obtained in Theorem~A, as well as the $C^2$-robust heterodimensional cycle in Theorem~B
comes from the analysis of the intersection of a pair of Cantor sets on the real line. 
In all our results, including Theorem~C below, the thicknesses of such Cantor sets are essentially used, and therefore
it is not possible to weaken the $C^2$ topology to $C^1$. 
Indeed, Ures \cite{Ure95} showed that the thicknesses of $C^1$-generic regular Cantor sets are zero, and
 Moreira \cite{Mor11} showed that  
 any two intersecting regular Cantor sets can be $C^1$-perturbed so that the perturbed Cantor sets do not intersect each other.

From Theorem~A, we observe that 
the Hausdorff dimension of $W^u(\Gamma)$ and that of $W^s(\Sigma)$ converge to $2$ as $\min\{a_1,a_2\}\to\infty$.
Our next result shows that $\mathscr{H}$ does not contain a continuum, but contains a set of Hausdorff dimension nearly $1$.

\begin{theoremc}
\label{main}
 There exists $T_0>1$ 
 such that if $f_0\in{\rm Diff}^2(\mathbb R^3)$ is a
 heterochaos horseshoe map that
  is  dimension-reducible and satisfies 
 $\min\left\{a_1,
 a_2\right\}>T_0$, 
then there exists a $C^2$ neighborhood $\mathcal V$ of $f_0$ such that
for any $f\in\mathcal V$,
\[\begin{split}
\left(1+ \frac{1}{\sqrt{\min\{a_1,a_2\}}}\right)^{-1}<
\dim_{\rm H} \mathscr{H}<1.\end{split}\]
 \end{theoremc}
 
\begin{figure}
\centering
\includegraphics[height=0.5\linewidth,width=0.5\linewidth]{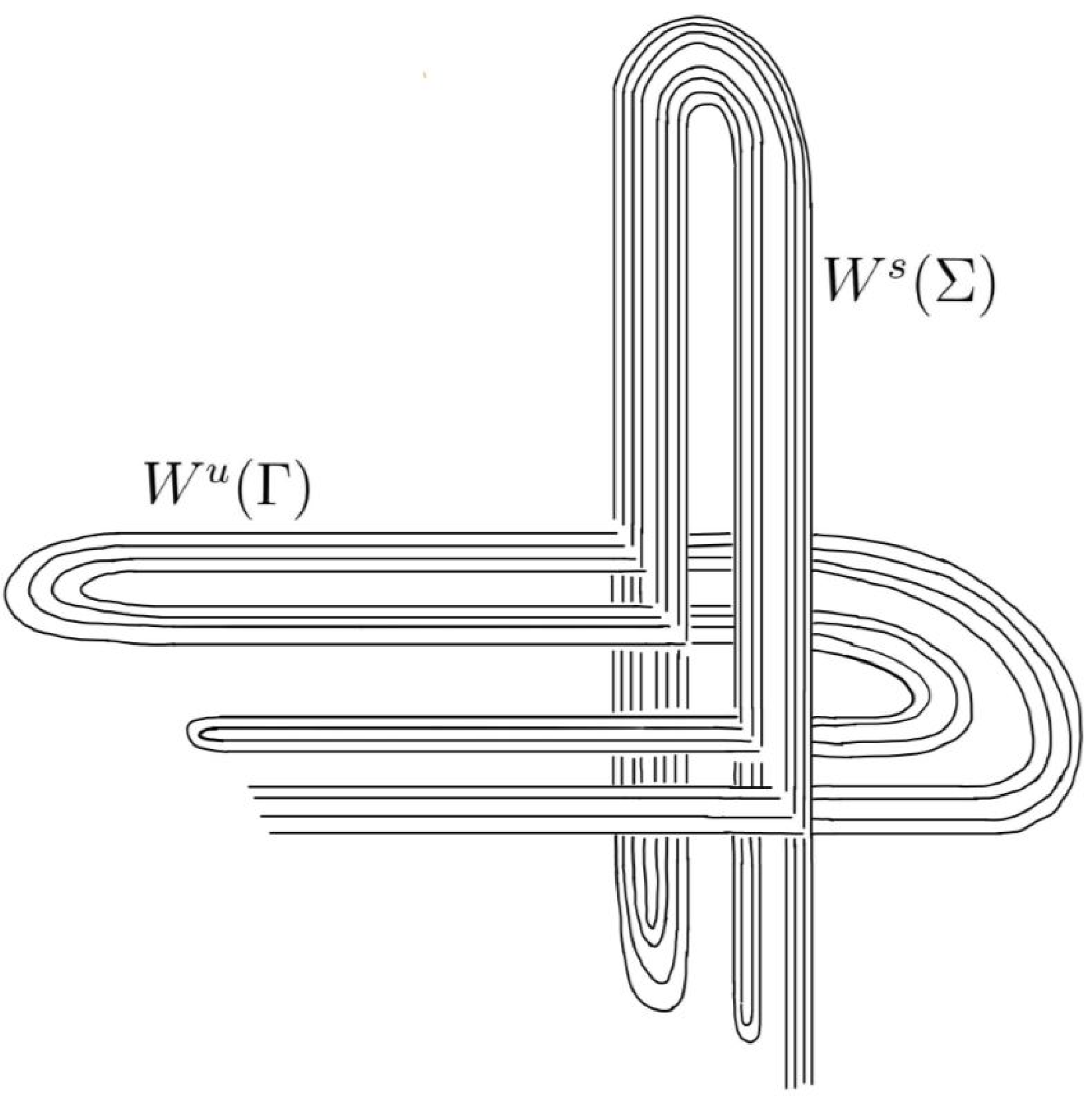}
\caption{The sets $W^u(\Gamma)$, $W^s(\Sigma)$ and their fractal intersection.
}
\label{fig:heteroclinic-intersection}
\end{figure}
It is worthwhile to compare Theorem~C with the result of Moreira and Yoccoz \cite{MorYoc01}, which 
shows that, for generic pairs of regular Cantor sets on the real line with large Hausdorff dimension, one can almost always find translations for which the two translated Cantor sets robustly have
positive Hausdorff dimension. The  thickness is not used in \cite{MorYoc01}.
Our proof of Theorem~C relies on the result of Hunt et al. \cite{HKY93} which asserts that two interleaved Cantor sets with large thicknesses contain a Cantor set with large thickness.

 We do not claim that Theorem~C gives optimal estimates
 on Hausdorff dimension of heteroclinic intersection 
for diffeomorphisms in dimension three. For example,
 modifying the construction of Asaoka \cite{Asa08}, Barrientos and P\'erez \cite{BarPer19} constructed a $C^1$ diffeomorphism having two hyperbolic sets, one of index 1
and the other of index 2, for which the unstable set of the first  set and the stable set of the second one contain  two-dimensional submanifolds which intersect each other in a smooth curve.
As we reiterate, our aim here is to introduce a model and results which
provide a hands-on, elementary understanding of complicated dynamics in dimension three.

\subsection{Outline of proofs of the theorems}\label{outline}

 Proofs of the main results are briefly outlined as follows.
We focus on the Hausdorff dimension estimate of $\mathscr{H}$, since
 that of $W^u(\Gamma)$ and $W^s(\Sigma)$ are by-products. 
We begin by remarking that geometric structures of these sets
for a general heterochaos horseshoe map 
can be quite rich. 
Our numerical experiment suggests that the heteroclinic intersection can contain 
a fractal set as in FIGURE~\ref{fig:indexset-continuous-map}(b),
with Hausdorff dimension seemingly exceeding $1$. Moreover, this fractal set appears to persist under small perturbations of the map.
A dimension estimate in such a case is beyond our reach.

Therefore, we perform a dimension reduction.
The assumption of dimension reducibility allows us to use the theory of normally hyperbolic invariant manifolds
\cite{Fen72,HPS77}
to construct two $C^2$ surfaces, called 
{\it graph-invariant surfaces}, one of which contains
a neighborhood of $\Gamma$ in $W^u(\Gamma)$ and the other contains a neighborhood of $\Sigma$ in $W^s(\Sigma)$
(see
Section~\ref{quasi}).
 The upper bound $\dim_{\rm H} \mathscr{H}<1$
 is a consequence of this construction.

\begin{figure}
    \centering
     \subfigure[]{\includegraphics[height=0.30\textwidth,width=0.27\textwidth]{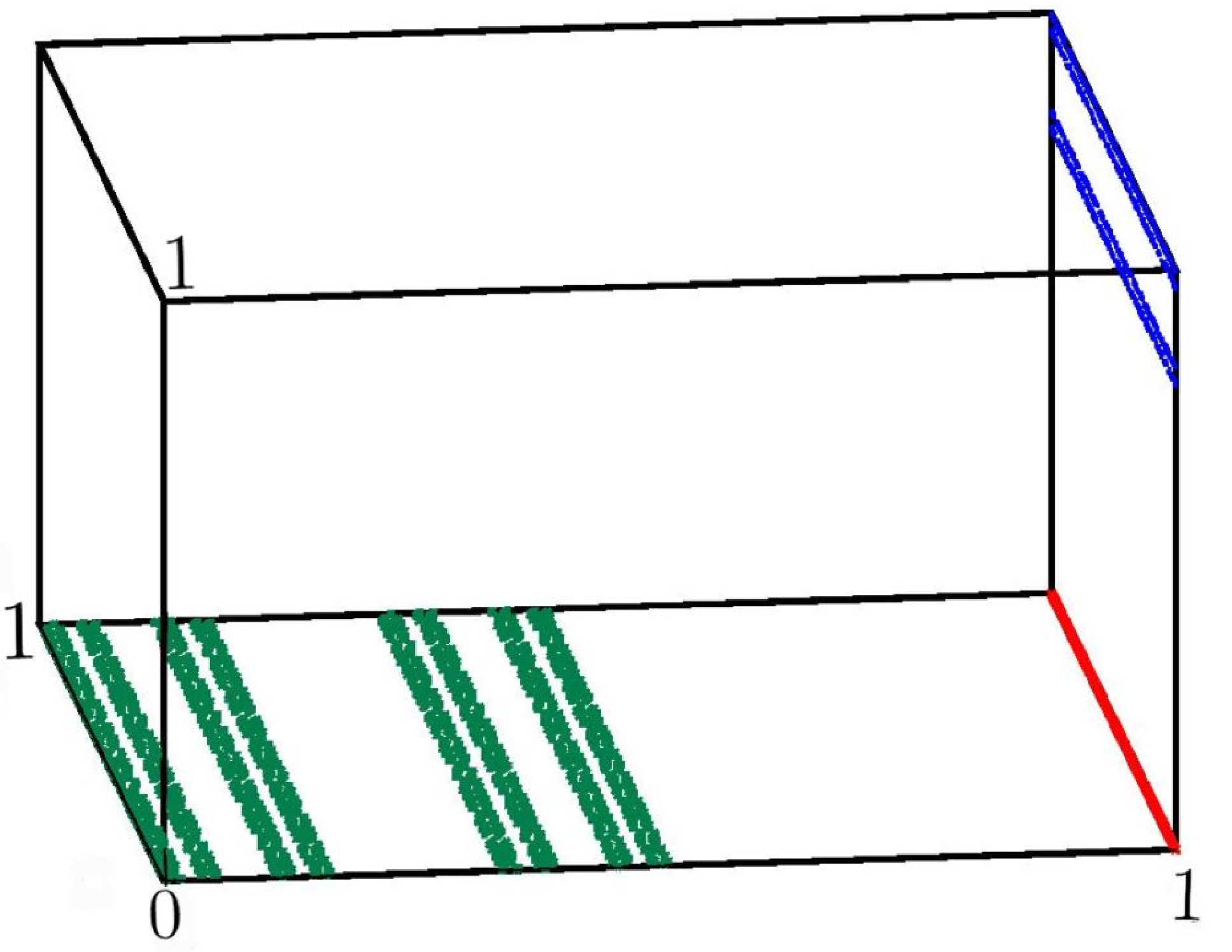}}
\qquad
\qquad
    \subfigure[]{\includegraphics[height=0.30\textwidth,width=0.27\textwidth]{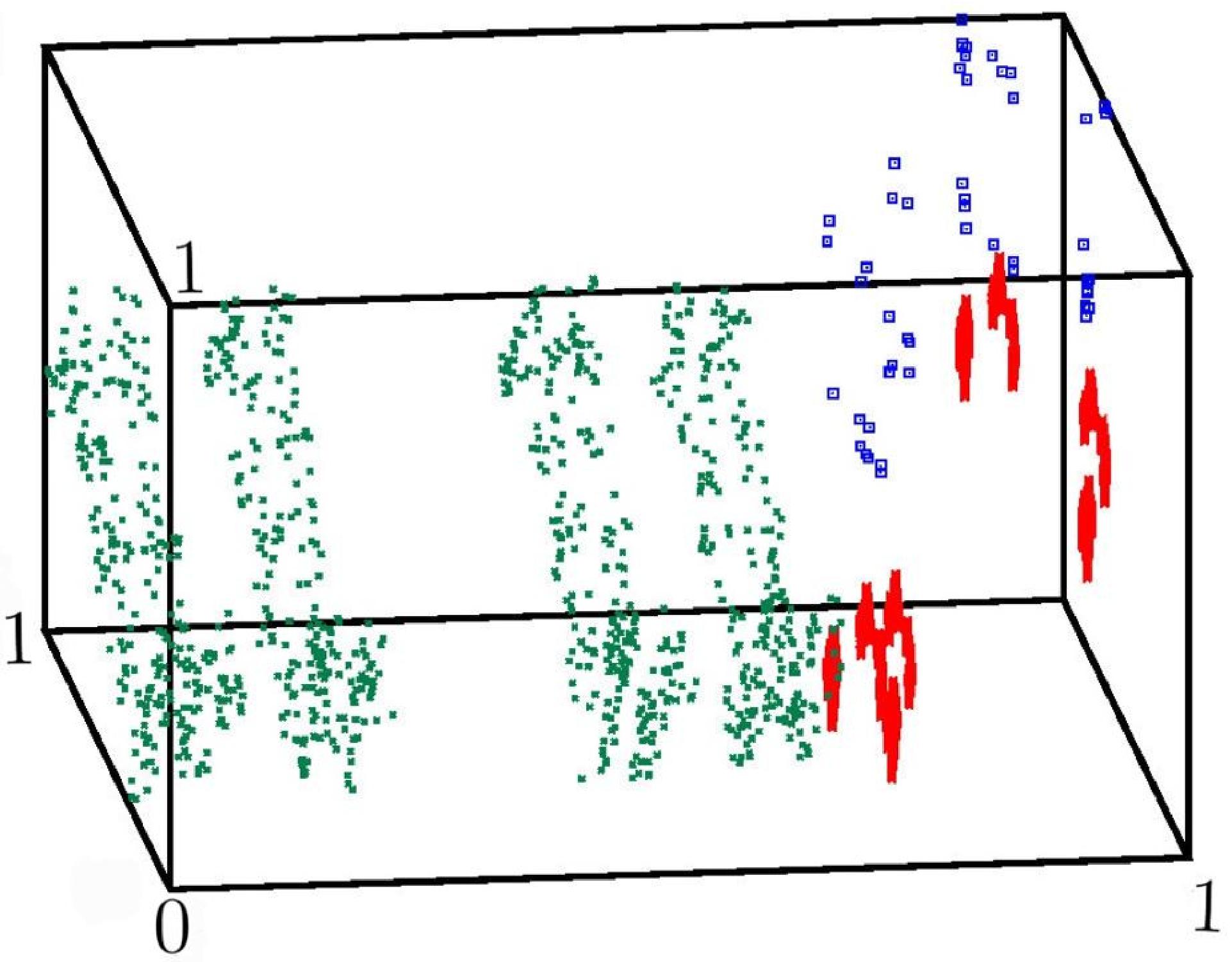}}
   \caption{
   Numerical identification of $\Gamma$ (green), $\Sigma$ (blue) and part of their heteroclinic intersection (red) for heterochaos horseshoe maps: (a) dimension-reducible; (b) not dimension reducible.}
    \label{fig:indexset-continuous-map}
\end{figure}
 
A lower estimate of $\dim_{\rm H} \mathscr{H}$ as well as
 showing $\mathscr{H}\neq\emptyset$
 is more delicate.
We perform a further reduction to the problem of 
how two Cantor sets on the real line intersect each other.
More precisely, we construct two 
Cantor sets contained in a smooth curve,
by projecting the cross sections of $W^u(\Gamma)$ and $W^s(\Sigma)$ 
along their unstable and stable manifolds. Under the assumption of Theorem~A, 
we show that the product
of their thicknesses is strictly bigger than $1$. We then use the result of Newhouse \cite{New70} to conclude
that the two Cantor sets intersect each other, and $\mathscr{H}$ is non-empty.

The number $T_0$ in Theorem~C is a large constant determined by the result of Hunt et al. \cite{HKY93} that
we recall in Section~2.
 We show that, if $T_0$ is sufficiently large,
 then the assumption $\min\left\{a_1,a_2\right\}>T_0$ 
  implies that the two Cantor sets have uniformly large thicknesses.
 By the result of Hunt et al., their intersection contains a Cantor set with uniformly large thickness.
We then appeal to the general lower bound of Newhouse
\cite{New79} on the Hausdorff dimension
of Cantor sets in terms of their thicknesses.

\subsection{Comparison with blender approach}
There is a different line  of research where the intersection analysis of invariant manifolds are
carried out using {\it blenders}.
For a comparison
with our results,
among the many types of blenders, 
the one in \cite{ACW21} has an advantage
in that an affine
horseshoe and its perturbations were also considered.
For other definitions of blenders, see also e.g., \cite{BCDW16,BonDia96} and 
\cite[Section~6.2]{BDV04}.
Key properties of a $d_{cs}$-stable (resp. $d_{cu}$-unstable) blender, in dimension three for example, is that a related one-dimensional local  stable (resp. unstable) manifolds form
a ``topological surface'' which intersects any curve ``transverse'' to it, and that this property is $C^1$-robust. 
These key properties are used to blend $C^1$-robust intersections between one-dimensional invariant manifolds.
For the model presented in the paper, 
 the local stable manifolds of
 $\Sigma$ 
 are contained in a $C^2$ surface
 (see Proposition~\ref{invariance}).
 Hence, $\Sigma$ is not a $d_{cs}$-stable blender: the first key property obviously fails.
 Similarly,
 $\Gamma$ is not a $d_{cu}$-unstable blender since its local unstable manifolds 
 are contained in a $C^2$ surface.

 For heterochaos horseshoe maps 
 satisfying some additional assumptions,
 using blenders
  one can even construct $C^1$-robust heterodimensional cycles associated to hyperbolic sets other than $\Gamma$, $\Sigma$.
  For details, see Appendix.

\subsection{Structure of the paper}
The rest of this paper consists of three sections.
In Section~2 we introduce main tools, and
 in Section~3 perform main constructions.
 In Section~4 we complete the proofs of Theorems~A and C.

\section{Main Tools}
In this section we introduce main tools needed for the proofs of the theorems.
 In Section~\ref{normal} we state a version of
the fundamental theorem of Hirsch et al. \cite{HPS77} on normally hyperbolic invariant manifolds.
In Section~\ref{thick} we introduce the thickness of Cantor sets on the real line, 
and recall the gap lemma of Newhouse \cite{New70} and the theorem of Hunt et al. \cite{HKY93} on when two thick Cantor sets intersect thickly.

\subsection{Persistence of normally attracting invariant manifolds}\label{normal}
Let $M$ be a $C^\infty$ Riemannian manifold and let
$\varphi\colon M\to M$ be a $C^r$ diffeomorphism $(r\geq1)$.
Let $T_p\varphi$ denote the differential of $\varphi$ at $p\in M$.
Let $d$ denote the distance on $M$ given by the Riemannian metric.
The unstable and stable sets of a point $p\in M$ are given by \[\begin{split}
W^u(p)&=\{q\in M\colon d(\varphi^{n}(p),\varphi^{n}(q))\to0\text{ as }n\to-\infty\},\\
W^s(p)&=\{q\in M\colon d(\varphi^{n}(p),\varphi^{n}(q))\to0\text{ as }n\to\infty\},
\end{split}\]
respectively.
If these sets are submanifolds of $M$, they are called {\it unstable} and {\it stable manifolds of $p$}.

A $C^1$ submanifold $V$ of $M$ is called {\it $r$-normally hyperbolic for $\varphi$} if $\varphi(V)=V$, and there exist a continuous $T\varphi$-invariant splitting $T_VM=TV\oplus N^s\oplus N^u$ and constants $c>0$, $\nu\in(0,1)$ such that for any $p\in V$ and all unit vectors
$v\in T_pV$, $n^s\in N_p^s$, $n^u\in N_p^u$ and any $n\geq1$, 
\[\frac{\|T_p\varphi^nn^s\|}{\|T_p\varphi^nv\|^r}\leq c\nu^{n}\quad\text{and}\quad\frac{\|T_p\varphi^{-n}n^u\|}
{\|T_p\varphi^{-n}v\|^r}\leq c\nu^{n},\]
where $\Vert\cdot\Vert$ denotes the norm on the tangent spaces that comes from the Riemannian metric. In the case $N^u=\{0\}$, $V$ is called {\it $r$-normally attracting for $\varphi$}.

Normally hyperbolic invariant manifolds are persistent \cite{Fen72,HPS77}. We recall the precise statement of 
\cite[Theorem~4.1]{HPS77} for the normally attracting case.
The theorem below
asserts that a compact $r$-normally attracting manifold $V$ for a $C^r$ diffeomorphism $\varphi$ is $C^r$, and the stable set $\bigcup_{p\in V}W^s(p)$ of $V$
is invariantly fibered by $C^r$ submanifolds tangent at $V$ to $N^s$.
Moreover, these structures are unique and persistent under $C^r$ perturbations of $\varphi$.

\begin{theorem}[{\rm \cite[Theorem~4.1]{HPS77}}]\label{persistence}
Let $\varphi\colon M\to M$ be a $C^r$ diffeomorphism, $r\geq1$, of a $C^\infty$ Riemannian manifold $M$ leaving a compact $C^1$ submanifold $V$ invariant.
Assume $\varphi$ is $r$-normally attracting at $V$ respecting the splitting
$T_VM=TV\oplus N^s.$
Then $V$ is $C^r$, and the following hold:
\begin{itemize}
\item[(a)] For each $p\in V$ there exists a regular $C^r$ submanifold $W^{ss}(p)$ 
contained in the stable set of $p$ and tangent to $N^s_p$, such that
$\varphi (W^{ss}(p))\subset W^{ss}(\varphi (p))$ for any $p\in V$ and
$\bigcup_{p\in V} W^{ss}(p)$ contains a neighborhood of $V$.

\item[(b)] If $\tilde \varphi$ is another diffeomorphism of $M$ which is sufficiently $C^r$-close to $\varphi$,
then $\tilde \varphi$ is $r$-normally attracting at some unique $\tilde V$, $C^r$-close to $V$.
\end{itemize}
\end{theorem}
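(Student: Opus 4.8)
This is the normally attracting case of the Hirsch--Pugh--Shub invariant manifold theorem, and the plan is to prove it by the graph transform method combined with the $C^r$-section (fiber contraction) theorem. As a first step I would put $\varphi$ into a normal form near $V$: using the $C^\infty$ exponential map of $M$ together with the given $C^1$ embedding $V\hookrightarrow M$, one identifies a neighborhood of $V$ in $M$ with a neighborhood of the zero section of the $D\varphi$-invariant normal bundle $\pi\colon N^s\to V$, in which $\varphi$ becomes a map $F$ fixing the zero section whose fiberwise derivative along $V$ is a contraction covering the base diffeomorphism $\varphi|_V$; the $r$-normal attraction hypothesis becomes the bunching inequality $\|D\varphi^n|_{N^s_p}\|\le c\nu^n\, m\big(D\varphi^n_p|_{T_pV}\big)^{r}$ for all $p\in V$ and $n\ge1$, where $m(\cdot)$ denotes the conorm. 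One delicate point is already visible here: a priori $V$ is only $C^1$, so these tubular coordinates, and hence $F$, are only $C^1$; I would handle this in the standard way, first establishing Lipschitz and then $C^1$ regularity of the invariant manifold intrinsically through invariant cone fields (which require no smooth tube), and only afterwards installing better coordinates in order to bootstrap the smoothness.

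The core of the argument is a contraction on sections. On the Banach space of bounded continuous sections $\sigma\colon V\to N^s$ with $\|\sigma\|_{C^0}\le\rho$ I would define the graph transform $\Gamma$ by $\mathrm{graph}(\Gamma\sigma)=F(\mathrm{graph}\,\sigma)$; for $\rho$ small this is well defined because the base map induced on $V$ stays a homeomorphism, and it is a uniform $C^0$-contraction because the normal direction is contracted strictly more than the tangential one. The Banach fixed point theorem yields a unique invariant section, which must be the zero section since that is invariant, so $V$ is recovered, and the cone argument shows the fixed section is $C^1$. To upgrade to $C^r$ I would invoke the fiber contraction theorem: the formal derivative of a section satisfies its own graph-transform equation, a bundle map over $\Gamma$ that is fiberwise contracting precisely because of the bunching inequality, and iterating this on the tower of jet bundles up to order $r$ --- at each level the relevant fiber rate is exactly the quantity controlled by the bunching condition --- gives $V\in C^r$.

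The strong stable fibration in (a) is obtained by the same method applied along individual orbits inside $V$: for fixed $p\in V$ one linearizes $\varphi$ along the forward orbit $(\varphi^n p)_{n\ge0}$ and solves a non-autonomous Hadamard--Perron fixed point problem to produce a $C^r$ leaf $W^s_p$ tangent to $N^s_p$, with $\varphi(W^s_p)\subset W^s_{\varphi(p)}$ built into the construction; distinct leaves are disjoint by the sharp exponential-rate characterization of their points, and since $\dim W^s_p+\dim V=\dim M$, the leaves are transverse to $V$ of complementary dimension and therefore sweep out a full neighborhood of $V$ (here the normally attracting hypothesis $N^u=\{0\}$ is what makes this neighborhood, rather than a proper center-stable manifold, the object in question). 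Finally, for part (b) one uses that $r$-normal attraction with fixed constants $c,\nu$ is a $C^1$-open condition near $V$ (by a cocycle argument it is enough to control a single large iterate, and that is a finite-order, $C^1$-continuous condition): for $\tilde\varphi$ that is $C^r$-close to $\varphi$ the graph transform $\tilde\Gamma$ in the same coordinates is still a uniform contraction depending continuously on $\tilde\varphi$, so its fixed section is $C^0$-small, giving $\tilde V=\mathrm{graph}\,\tilde\sigma$ close to $V$; continuity of the fixed point of the jet-bundle tower in the parameter upgrades this to $C^r$-closeness, one checks that $\tilde\varphi$ is again $r$-normally attracting at $\tilde V$, and uniqueness of $\tilde V$ is simply uniqueness of the fixed section.

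The step I expect to be the main obstacle is the $C^r$ bootstrap: organizing the tower of fiber contractions on the jet bundles and verifying that the normal-attraction inequalities furnish a genuine contraction at every level, all while the coordinates one starts from are only $C^1$. Once the fiber contraction machinery is set up, existence, uniqueness, $C^r$ smoothness, the invariant fibration, and persistence all follow from variants of the one fixed point argument.
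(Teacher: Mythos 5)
The paper does not prove this statement at all: it is quoted verbatim, with citation, as (4.1) THEOREM of Hirsch--Pugh--Shub \cite{HPS77}, and is used as a black box in the proof of Proposition~\ref{invariance}. Your sketch --- graph transform on sections of the normal bundle, the fiber-contraction/jet-bundle argument for $C^r$ smoothness under the $r$-normal attraction (bunching) inequality, a non-autonomous Hadamard--Perron construction for the stable fibration, and openness of normal attraction plus continuity of the fixed section for persistence --- is precisely the strategy of the cited source, so it takes the same route as the authority the paper relies on; it is a reasonable outline, though as you note yourself the $C^r$ bootstrap on the tower of jet bundles is only indicated, not carried out, and that is where the real work in \cite{HPS77} lies.
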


\subsection{Intersection of Cantor sets}\label{thick}
We adopt the definition of thickness
by Palis and Takens \cite{PalTak93} that is equivalent 
to the one by Newhouse \cite{New70}.
Let $S$ be a Cantor set in $\mathbb R$.
A {\it gap} of $S$ is a connected component of $\mathbb R\setminus S$.
A {\it bounded gap} is a gap which is bounded.
Let $G$ be any bounded gap and $x$ be a boundary point of $G$.
Let $I$ denote the {\it bridge of $S$ at $x$},
i.e., the maximal interval in $\mathbb R$ that satisfies
 $x\in\partial I$, and contains no point of a gap 
    whose Euclidean length is at least $|G|$.
The thickness of $S$ at $x$ (the local thickness) is defined by
\[\tau(S,x)=\frac{|I|}{|G|}.\] The {\it thickness} $\tau(S)$ of $S$
is the infimum of $\tau(S,x)$ over all boundary points $x$ of bounded gaps.
We say two Cantor sets $S_1$, $S_2$ in $\mathbb R$
are {\it interleaved} if neither set is contained
in the closure of a gap of the other set.

\begin{lemma}[the gap lemma \cite{New70}]\label{gaplem}
Let $S_1, S_2$ be two interleaved Cantor sets in $\mathbb R$ such that $\tau(S_1)\tau(S_2)>1$.
Then $S_1\cap S_2$ is non-empty.
\end{lemma}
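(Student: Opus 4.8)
The plan is to argue by a nested-interval construction, producing a decreasing sequence of closed intervals, each meeting both $S_1$ and $S_2$, whose intersection is a single point lying in $S_1\cap S_2$. The key geometric input is the following local alternative, which I would isolate as a sublemma: if $C$ and $C'$ are two Cantor sets with $\tau(C)\tau(C')>1$, if $G$ is a bounded gap of $C$ and $G'$ is a bounded gap of $C'$, and if $G'$ is ``linked'' with the bridge $I$ of $C$ at one endpoint $x$ of $G$ (meaning one endpoint of $G'$ lies in $I$ and $G'\not\subset I$ and $G'$ does not contain $G$), then in fact one endpoint of $G'$ lies in $I$ while the other endpoint lies in $G$; equivalently, the endpoint of $G'$ that is \emph{not} in $I$ is a point of $C\cap \overline{G}$. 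The proof of this sublemma is the combinatorial heart: by definition of the bridge, $I$ contains no gap of $C$ of length $\ge|G|$, so if $G'$ had an endpoint outside $I$ on the far side, $|G'|$ would have to be at least the distance across $I$, which is $\tau(C,x)|G|\ge \tau(C)|G|$; but then the bridge of $C'$ at that endpoint of $G'$ has length $\ge\tau(C')|G'|\ge\tau(C')\tau(C)|G|>|G|$, and that bridge would have to fit inside $G$ (since it contains no gap of $C'$ of length $\ge|G'|\ge|G|$ and cannot cross $G$), a contradiction with its length exceeding $|G|$. I would write this out carefully with attention to which side of which interval each endpoint sits on.

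With the sublemma in hand, the main construction runs as follows. Using interleaving, choose an initial closed interval $I_0$ with both endpoints in $S_1$ (say the endpoints of a bounded gap of $S_1$, chosen so that $I_0$ also contains points of $S_2$ — possible precisely because $S_2$ is not contained in the closure of a gap of $S_1$), and such that $S_2\cap I_0\ne\emptyset$ with $S_2$ not entirely on one side. Inductively, suppose $I_n=[\alpha_n,\beta_n]$ has endpoints in one of the two Cantor sets, say $\alpha_n,\beta_n\in S_1$, and $I_n\cap S_2\ne\emptyset$ and $I_n\not\subset$ a single gap of $S_2$. If some point of $S_2$ lies in $I_n$ and we are not yet done, there is a bounded gap $G'$ of $S_2$ that straddles a point of $S_1$ inside $I_n$, or else $S_2\cap I_n$ is ``trapped'' against an endpoint; in the nontrivial case the sublemma (applied with the roles of the Cantor sets as appropriate, using that $\alpha_n$ or $\beta_n$ bounds a gap of $S_1$ whose bridge enters $I_n$) shows that one endpoint of $G'$ lies in $S_1\cap I_n$. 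Take $I_{n+1}$ to be the subinterval of $I_n$ cut off between that $S_1$-endpoint of $G'$ and the nearest original endpoint; then $I_{n+1}$ has one endpoint in $S_1$ and one in $S_2$, is strictly shorter than $I_n$ (its length is at most that of the relevant gap of $S_1$, which shrinks), and still meets both sets. Swapping the roles of $S_1$ and $S_2$ at each stage keeps the induction going.

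Finally I would check the endgame. The intervals $I_n$ are nested and closed, so $\bigcap_n I_n\ne\emptyset$; one shows the lengths $|I_n|\to0$ because at each step $|I_{n+1}|$ is bounded by the length of a bounded gap of one of the Cantor sets, and the construction forces those gap lengths to tend to $0$ (any infinite sequence of distinct bounded gaps of a Cantor set has lengths tending to zero, and a short compactness/repetition argument rules out the construction stalling at a fixed gap). If the process terminates at some finite stage, it does so exactly because an endpoint of the current interval already lies in both $S_1$ and $S_2$, giving the point we want; otherwise $\bigcap_n I_n$ is a single point $p$, and since $p$ is a limit of endpoints lying in $S_1$ and also a limit of endpoints lying in $S_2$, and both sets are closed, $p\in S_1\cap S_2$. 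The main obstacle, as flagged, is getting the sublemma's case analysis exactly right — tracking which endpoint of $G'$ is inside the bridge and ensuring the inequality $\tau(C)\tau(C')>1$ is deployed at the one place it is genuinely needed — together with the bookkeeping that guarantees $|I_n|\to0$ rather than the construction cycling.
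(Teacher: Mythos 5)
The paper offers no proof of this lemma (it is quoted from Newhouse), so your argument has to stand on its own, and its ``combinatorial heart'' does not: the sublemma is false as stated, and both key steps in its proof fail. First, from ``one endpoint of $G'$ lies in $I$ and the other lies beyond the far end of $I$'' you cannot conclude $|G'|\ge|I|\ge\tau(C,x)|G|$: the endpoint of $G'$ inside $I$ need not be anywhere near $x$, so $G'$ only covers the (possibly tiny) piece of $I$ beyond that endpoint. Second, the claim that the bridge of $C'$ at the relevant endpoint of $G'$ ``would have to fit inside $G$'' has no basis: a bridge of $C'$ is constrained only by gaps of $C'$, and it may happily cross the gap $G$ of $C$ and extend far past it. Concretely, take $C$ with a gap $G=(0.4,0.5)$ and bridge $I=[0.5,0.9]$ at $x=0.5$, and let $C'$ be a very thick Cantor set spanning, say, $[-50,0.7]\cup[5,60]$ with only tiny gaps except for $G'=(0.7,5)$; then $\tau(C)\tau(C')>1$, $G'$ is ``linked with $I$'' in your sense, yet neither endpoint of $G'$ lies in $G$. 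So the place where $\tau(S_1)\tau(S_2)>1$ must be used is genuinely different: the correct local step (Newhouse's, reproduced in Palis--Takens) concerns a \emph{linked pair of gaps} $U$ of $S_1$ and $V$ of $S_2$, and the two bridges at the crossing endpoints simultaneously --- if the bridge of $S_1$ at the endpoint of $U$ inside $V$ fails to reach the far endpoint of $V$, and the bridge of $S_2$ at the endpoint of $V$ inside $U$ fails to reach the far endpoint of $U$, then $\tau(S_1)|U|<|V|$ and $\tau(S_2)|V|<|U|$, contradicting the product hypothesis. A one-gap-versus-one-bridge statement like your sublemma cannot be salvaged.

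Beyond that, the surrounding bookkeeping is also not yet a proof. Your initial step misuses interleaving (the hypothesis says $S_2$ is not contained in the closure of a gap of $S_1$; it does not provide a gap of $S_1$ whose closure contains points of $S_2$), and your endgame asserts $|I_n|\to 0$ by appeal to ``a short compactness/repetition argument'' that is never given --- strictly decreasing gap lengths alone do not tend to $0$, and nothing in your construction forces the nested intervals to shrink. In the standard argument these issues are handled by the linked-pair iteration: from a linked pair one either finds a common point (an endpoint of one gap lying in the other Cantor set) or produces a new linked pair in which one gap is replaced by a strictly smaller gap of the same set, and one then shows this process either terminates at a point of $S_1\cap S_2$ or forces the relevant endpoints to converge to such a point. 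If you want to keep your nested-interval framing, you must first replace the sublemma by the two-bridge alternative above and then prove, not assert, the convergence of your intervals.
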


The gap lemma asserts that two interleaved Cantor sets on the real line intersect each other if the product of their thicknesses is greater than one.
It does not imply any lower bound of the Hausdorff dimension of the intersection of the two Cantor sets. Indeed,
Williams \cite{Wil91} observed that two interleaved Cantor sets can have thicknesses well above $1$ and still only intersect at a single point. 
The next theorem in \cite{HKY93} asserts that
 the intersection of two interleaved Cantor sets with large thicknesses contains a Cantor set with  large thickness.

\begin{theorem}[{\rm in \cite[p.881, Remark]{HKY93}}]\label{cant-inter}
For any $\varepsilon\in(0,1)$ there exists $T>0$ such that if two Cantor sets $S_1$, $S_2$ in $\mathbb R$ 
with $\tau(S_1)> T$, $\tau(S_2)> T$ are interleaved,
then $S_1\cap S_2$ contains a Cantor set whose thickness is at least
$(1-\varepsilon)\sqrt{\min\{\tau(S_1),\tau(S_2)\}}$.
\end{theorem}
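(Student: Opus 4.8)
\emph{Strategy.} Set $\tau_i=\tau(S_i)$ and, by symmetry, assume $\tau:=\tau_1\le\tau_2$, so $\tau=\min\{\tau_1,\tau_2\}$; normalise $S_1,S_2\subset[0,1]$. The plan is to construct the required Cantor set as the limit of a Cantor scheme: a family $\{J_\sigma\}$ of nonempty compact intervals indexed by finite binary words $\sigma\in\bigcup_{n\ge0}\{0,1\}^n$, with $J_{\sigma0}$ and $J_{\sigma1}$ disjoint subintervals of $J_\sigma$, with all endpoints of all $J_\sigma$ lying in $S_1\cap S_2$, and with $\sup_{|\sigma|=n}|J_\sigma|\to0$. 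Then $K:=\bigcap_n\bigcup_{|\sigma|=n}J_\sigma$ is a Cantor set inside $\overline{S_1\cap S_2}=S_1\cap S_2$, its bounded gaps are the middle intervals $M_\sigma$ (the component of $J_\sigma\setminus(J_{\sigma0}\cup J_{\sigma1})$ lying between the two children), and $\tau(K)$ is controlled by the ratios $|J_{\sigma i}|/|M_\sigma|$.

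\emph{The invariant and its consequences.} I would maintain that each $J_\sigma=[\alpha,\beta]$ is a \emph{linked bridge configuration}: $\alpha,\beta\in S_1\cap S_2$; for each $i\in\{1,2\}$ the points $\alpha,\beta$ are boundary points of bounded gaps of $S_i$ lying outside $J_\sigma$, each at least as long as every bounded gap of $S_i$ inside $J_\sigma$ (so $[\alpha,\beta]$ sits inside a bridge of $S_i$ at each end); and $S_1\cap J_\sigma$ and $S_2\cap J_\sigma$ are interleaved. Two features of large thickness then do the work. First, any bounded gap $G$ of $S_i$ strictly inside $J_\sigma$ obeys $|J_\sigma|\ge(2\tau_i+1)|G|$, since the two $S_i$-bridges adjoining $G$ have length $\ge\tau_i|G|$ and, by the flanking condition, cannot escape $J_\sigma$; hence all $S_1$- and $S_2$-gaps inside $J_\sigma$ have relative length at most $\kappa:=1/(2\tau+1)$. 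Second, the flanking condition forces $\tau(S_i\cap J_\sigma)\ge\tau_i$, so $\tau(S_1\cap J_\sigma)\,\tau(S_2\cap J_\sigma)>1$ and Lemma~\ref{gaplem} gives a point of $S_1\cap S_2$ in every linked bridge configuration.

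\emph{Splitting, and reading off the thickness.} Since all gaps inside $J_\sigma$ are tiny, one can pick a free gap $G^\star$---a bounded gap of $S_1$ or of $S_2$ inside $J_\sigma$ meeting no point of $S_1\cap S_2$---that cuts $J_\sigma$ into a left and a right part, \emph{each} again a linked bridge configuration of length a definite fraction of $|J_\sigma|$. Inside each part I would take the child $J_{\sigma i}$ whose edge nearest $G^\star$ lies as close to $G^\star$ as any point flanked by bounded gaps of \emph{both} $S_1$ and $S_2$ can lie; the needed retreat is along a bridge, hence short, so the emerging middle gap $M_\sigma\supseteq G^\star$ has $|M_\sigma|\le c_1\kappa|J_\sigma|$ while $|J_{\sigma i}|\ge c_2|J_\sigma|$ for absolute constants $c_1,c_2>0$, and by rescaling the gap threshold from level to level one keeps the relative $M_\sigma$-sizes non-increasing down the tree. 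Then diameters decay geometrically, $K$ is a Cantor set, and for a boundary point $x$ of $M_\sigma$ the bridge of $K$ at $x$ runs through the adjacent child---whose internal $K$-gaps are all shorter than $M_\sigma$---so $\tau(K)\ge\inf_\sigma|J_{\sigma i}|/|M_\sigma|$, which is of order $\tau$. To reach the sharp exponent one replaces the crude bound $|M_\sigma|\le c_1\kappa|J_\sigma|$ by an optimisation of each split---whether to cut at an $S_1$-gap or an $S_2$-gap, and how wide to make the children---against the trade-off that a narrower child forces a proportionally longer separating gap at the next level, whereas insisting that a child be flanked by a large gap of the \emph{other} set costs a proportional retreat. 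Balancing these equalises the roles of $\tau_1$ and $\tau_2$ and yields $\tau(K)\ge(1-\epsilon)\sqrt{\min\{\tau_1,\tau_2\}}$, with $(1-\epsilon)$ absorbing all the $o(1)$ and absolute-constant losses---which is exactly why $\delta=\delta(\epsilon)$ must be large. (No bound of the form $c\cdot\min\{\tau_1,\tau_2\}$ can hold, consistent with Williams's interleaved thick Cantor sets meeting in a single point.)

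\emph{The main obstacle.} The scale arithmetic is routine once the framework is fixed; the hard part is the choice of invariant. One must guarantee that after the cut at $G^\star$ \emph{both} children again carry enough of \emph{both} Cantor sets, correctly flanked---precisely the subtlety that already makes the bare gap lemma delicate, here doubled, with two interacting gap hierarchies to keep under simultaneous control and the relative separating-gap sizes to keep non-increasing. Arranging the competing costs so that the exponent is exactly $1/2$, rather than $1/2$ minus an error in the exponent or a worse constant, is the second delicate point, and is where the largeness of $\delta$ is spent.
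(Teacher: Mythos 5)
First, a remark on the comparison itself: the paper does not prove this statement -- Theorem~\ref{cant-inter} is quoted verbatim from the Remark on p.~881 of \cite{HKY93} and is used as a black box -- so there is no in-paper argument to measure yours against; your proposal has to stand on its own as a proof of the Hunt--Kan--Yorke result.

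As such, it has a genuine gap, and in fact an internal contradiction that locates the gap precisely. Your preparatory claims are fine: given the flanking condition, every bounded gap $G$ of $S_i$ inside $J_\sigma$ satisfies $|J_\sigma|\geq(2\tau_i+1)|G|$, the restricted sets retain their thickness, and Lemma~\ref{gaplem} puts a point of $S_1\cap S_2$ in every linked bridge configuration. The problem is the splitting step. You assert that one can cut at a gap $G^\star$ and, after a ``short retreat along a bridge,'' obtain two children with $|M_\sigma|\leq c_1\kappa|J_\sigma|$ and $|J_{\sigma i}|\geq c_2|J_\sigma|$ for \emph{absolute} constants $c_1,c_2$; this would give $\tau(K)\geq (c_2/c_1)(2\tau+1)$, i.e.\ a bound of the form $c\cdot\min\{\tau_1,\tau_2\}$. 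But your own closing parenthetical correctly observes that no bound of that form can hold -- and indeed the $\sqrt{\cdot}$ in \cite{HKY93} is essentially optimal in order. So the splitting lemma with absolute constants is not merely unproven, it is false: the retreat needed to re-establish, at the new endpoints, membership in $S_1\cap S_2$ together with flanking gaps of \emph{both} sets can be forced to consume a length comparable to $|J_\sigma|/\sqrt{\tau}$ (or else the separating gap must be that long). The entire quantitative content of the theorem -- why the exponent is exactly $1/2$ and why $\delta$ must be large -- lives in the two-parameter optimisation you describe only as ``balancing these,'' and that is the theorem, not a routine detail. Until that trade-off is carried out explicitly (which gap hierarchy to cut, how far to retreat, and why the worst case of the recursion yields $(1-\epsilon)\sqrt{\min\{\tau_1,\tau_2\}}$), the proposal is a plausible scaffold around the missing core rather than a proof.
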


We will deal with Cantor
sets which are subsets of $C^2$ curves in $\mathbb R^3$.
Their thicknesses will be defined with respect to the induced metrics on these curves.

\section{Main constructions}
In this section we perform main constructions needed for 
the proofs of our main results. 
In Section~\ref{constants-s}
we introduce the dimension reducibility which will be assumed throughout the rest of the paper. In Section~\ref{hyp} we provide standard hyperbolicity estimates.  In Section~\ref{quasi} we construct two graph-invariant surfaces, and in Section~\ref{Cantorset} construct two Cantor sets. In Section~\ref{s-c} we describe the structure of these Cantor sets in terms of symbolic dynamics.

\subsection{Initial setup}\label{constants-s}
Let $f_0\in{\rm Diff}^1(\mathbb R^3)$ be a  heterochaos horseshoe map. Define
\begin{equation}\label{constant1}\lambda_{0,i}
=\frac{|A_i^*|}{|A_i|}\quad\text{and}\quad
\nu_{0,i}=\frac{|B_i^*|}{|B_i|}\quad \text{ for }i\in\{u,c,s\}.
\end{equation}
We say $f_0$  is {\it dimension-reducible} if 
$f_0 \in{\rm Diff}^2(\mathbb R^3)$,
  all the diagonal elements of its Jacobian matrices on $A$, $B$, $C$, $D$ are positive,
  and
\begin{equation}\label{d-reduce}\lambda_{0,c}^2>\lambda_{0,s}\quad\text{and}\quad 
\nu_{0,u}>\nu_{0,c}^{2}.\end{equation}

\noindent{\bf  (Standing hypothesis for the rest of the paper)}: 
 $f_0\in{\rm Diff}^2(\mathbb R^3)$ is a 
  heterochaos horseshoe map that is dimension-reducible.\medskip
  
In particular, we have
\begin{equation}\label{condition1}\lambda_{0,u}>2>1/2>\lambda_{0,c}>\lambda_{0,s}
\quad\text{and}\quad\nu_{0,u}>\nu_{0,c}>2>1/2>\nu_{0,s}.\end{equation}

\subsection{Hyperbolic behaviors}\label{hyp}
We use the set 
$\{\partial_{x_u},\partial_{ x_c},\partial_{x_s}\}$ of the first-order partial derivatives as a basis for the tangent space
at a point of $\mathbb R^3$. 
For $\theta>0$ and $p\in\mathbb R^3$,
we introduce the following
subsets of $T_p\mathbb R^3$:
\[\begin{split}
\mathcal C^{u}_p(\theta)&=\left\{(\xi,\eta,\zeta)
\colon \theta|\xi|\geq\sqrt{|\eta|^2+|\zeta|^2}\right\},\  \mathcal C^{cu}_p(\theta)=\left\{(\xi,\eta,\zeta)\colon \theta\sqrt{|\xi|^2+|\eta|^2}\geq|\zeta|\right\},\\ 
\mathcal C^{s}_p(\theta)&=\left\{(\xi,\eta,\zeta)\colon \sqrt{|\xi|^2+|\eta|^2}\leq\theta|\zeta|\right\},\ \mathcal C^{cs}_p(\theta)=\left\{(\xi,\eta,\zeta)\colon|\xi|\leq \theta\sqrt{|\eta|^2+|\zeta|^2}\right\},\\
\mathcal C^{c}_p(\theta)&=
\mathcal C^{cu}_p(\theta)\cap\mathcal C^{cs}_p(\theta).
\end{split}\]
Note that $\mathcal C^u_p(\theta)\subset\mathcal C^{cu}_p(\theta)$
and
$\mathcal C^s_p(\theta)\subset\mathcal C^{cs}_p(\theta)$.
If $\theta\in(0,1/10)$ then
 $\mathcal C^{u}_p(\theta)\cap
\mathcal C^{cs}_p(\theta)=\{0\}=\mathcal C^{cu}_p(\theta)\cap
\mathcal C^{s}_p(\theta)$.

By a {\it $C^r$ curve} (resp. {\it surface})
we mean a regular one-(resp. two-)dimensional $C^r$ submanifold of $\mathbb R^3$.
For $i\in\{u,c,s\}$, 
we say a $C^1$ curve $\gamma$ is {\it tangent to} $\mathcal C^i(\theta)$ 
if $T_p\gamma\subset\mathcal C^i_p(\theta)$ holds for any $p\in\gamma$. For $i\in\{cu,cs\}$, 
we say
a $C^1$ surface $V$ is {\it tangent to} $\mathcal C^{i}(\theta)$ 
if $T_pV\subset\mathcal C_p^{i}(\theta)$ holds
for any $p\in V$.

For $p\in\mathbb R^3$ and a subset $\mathcal C$ of $T_p\mathbb R^3$,
define 
\[\|T_pf\|_{\mathcal C}=\sup_{v\in \mathcal C\setminus\{0\}}\frac{\|T_pfv\|}{\|v\|}
\quad\text{
and }\quad|T_pf|_{\mathcal C}=\inf_{v\in \mathcal C\setminus\{0\}}\frac{\|T_pfv\|}{\|v\|},\]
where $\|\cdot\|$ denotes the Euclidean norm on the tangent space of $\mathbb R^3$.

The next two propositions are consequences of the affinity of $f_0$ on the four blocks with
the expansion and contraction rates
in \eqref{constant1} satisfying \eqref{condition1}.

\begin{prop}\label{tangent}
For any
 $\theta\in(0,1/10)$, there exist two
 disjoint open sets $U_1\supset R_1$, $U_2\supset R_2$ 
 such that for any 
 $f\in{\rm Diff}^1(\mathbb R^3)$ which is sufficiently $C^1$-close to $f_0$ and for $i=1,2$ the following hold:
 \begin{itemize}
\item[(a)] 
If $\gamma\subset U_i$ is a $C^1$ curve
which is tangent to $\mathcal C^u(\theta)$ (resp. $\mathcal C^{s}(\theta)$),
then $f(\gamma)$ (resp. $f^{-1}(\gamma)$) is tangent to $\mathcal C^u(\theta)$
(resp. $\mathcal C^{s}(\theta)$).

\item[(b)] If $V\subset U_i$ is a $C^1$ surface which is tangent to $\mathcal C^{cu}(\theta)$ (resp. $\mathcal C^{cs}(\theta)$),
then $f(V)$ (resp. $f^{-1}(V)$) is tangent to $\mathcal C^{cu}(\theta)$
(resp. $\mathcal C^{cs}(\theta)$).
\end{itemize}
 \end{prop}

\begin{prop}\label{cone-cor}
Let $\lambda_u$, $\lambda_c$, $\lambda_c'$, $\lambda_s$, $\nu_u$, $\nu_c$, $\nu_c'$, $\nu_s\in\mathbb R$ satisfy
\begin{equation}\label{condition10}\lambda_{0,u}>\lambda_u>2>1/2>\lambda_c>\lambda_{0,c}>\lambda_c'>\lambda_s>\lambda_{0,s}
\quad\text{and}\end{equation}
\begin{equation}\label{condition20}\nu_{0,u}>\nu_u>\nu_c>\nu_{0,c}>\nu_c'>2>1/2>\nu_s>\nu_{0,s}.\end{equation}
There exist $\theta\in(0,1/10)$ and two
 disjoint open sets $U_1\supset R_1$, $U_2\supset R_2$ 
 such that for any 
 $f\in{\rm Diff}^1(\mathbb R^3)$ which is sufficiently $C^1$-close to $f_0$ the following hold:
 \begin{itemize}
\item[(a)] If $p\in U_1$ then    \begin{equation}\label{cone-eq1}|T_pf|_{\mathcal C_{p}^{u}(\theta)}\geq\lambda_{u}\quad\text{and}\quad
         |T_{f(p)}f^{-1}|_{\mathcal C_{f(p)}^{s}(\theta)}\geq\lambda_{s}^{-1},\end{equation}   
       \begin{equation}\label{cone-eq2}
      |T_{f(p)}f^{-1}|_{\mathcal C^{c}_{f(p)}
      (\theta)}{\geq\lambda_{c}}^{-1}\quad\text{and}
      \quad\|T_{f(p)}f^{-1}\|_{\mathcal C^{c}_{f(p)}(\theta) }
       \leq{\lambda_{c}'}^{-1}.\end{equation}
  \item[(b)] If $p\in U_2$ then    \begin{equation}\label{cone-eq3}|T_pf^{-1}|_{\mathcal C_{p}^{s}(\theta)}\geq\nu_{s}^{-1}\quad\text{and}\quad
  |T_{f^{-1}(p)}f|_{\mathcal C_{f^{-1}(p)}^{u}(\theta)}\geq\nu_{u},\end{equation}  
       \begin{equation}\label{cone-eq4}|T_{f^{-1}(p)}f|_{\mathcal C^{c}_{f^{-1}(p)}(\theta) }\geq\nu_c'\quad\text{and}\quad\|T_{f^{-1}(p)}f\|_{\mathcal C^{c}_{f^{-1}(p)}(\theta) }
       \leq\nu_{c}.\end{equation}
       \end{itemize}
\end{prop}

\subsection{Construction of graph-invariant  surfaces}\label{quasi}
We prove a dimension reduction result announced in Section~\ref{outline}. A key concept is that of graph-invariant surfaces.
Since we deal with two horseshoe maps with different unstable dimensions, 
there are two different types of graph-invariant surfaces. 
Let
 \[\pi_1\colon (x_u,x_c,x_s)\mapsto(x_u,x_c)\quad\text{and}\quad\pi_2\colon (x_u,x_c,x_s)\mapsto(x_c,x_s)\] be the natural projections, and
 let $f\in{\rm Diff}^2(\mathbb R^3)$. A $C^2$ surface $V^{cu}$ is 
called {\it graph-invariant 
in $R_1$ for $f$} if 
there exists
a $C^2$ function $\phi_1\colon
\pi_1(R_1)\to \mathbb R$,
called the {\it defining function} with the following properties (see FIGURE~\ref{fig:graphinvariantsurface}):
\begin{itemize}
\item[(i)] $V^{cu}=\{(x_u,x_c,\phi_1(x_u,x_c))\colon(x_u,x_c)\in \pi_1(R_1)\}$.
\item[(ii)] 
$V^{cu}\cap f^{-1}(R_1)\subset f^{-1}(V^{cu})$ and
$V^{cu}\cap f(R_1)\subset f(V^{cu})$.

\item[(iii)] 
$\sum_{\alpha:1\leq|\alpha|\leq 2}\sup_{\pi_1(R_1)}|\partial^\alpha\phi_1|<1/10$.
\end{itemize}
where $\alpha=(\alpha_u,\alpha_c,\alpha_s)$ denotes the multi-index:
$\partial^\alpha=\partial^{\alpha_u}_{x_u}\partial^{\alpha_c}_{x_c}\partial^{\alpha_s}_{x_s}$.
Similarly,
a $C^2$ surface $V^{cs}$ is 
called {\it graph-invariant in $R_2$ for $f^{-1}$} if 
there exists
a $C^2$ function $\phi_2\colon
\pi_2(R_2)\to \mathbb R$ with the following properties:
\begin{itemize}
\item[(i)] $V^{cs}=\{(\phi_2(x_c,x_s),x_c,x_s)\colon (x_c,x_s)\in \pi_2(R_2)\}$.
\item[(ii)] 
$V^{cs}\cap f^{-1}(R_2)\subset f^{-1}(V^{cs})$ and 
$V^{cs}\cap f(R_2)\subset f(V^{cs})$.

\item[(iii)] $\sum_{\alpha:1\leq|\alpha|\leq 2}\sup_{\pi_2(R_2)}|\partial^\alpha\phi_2|<1/10$.
\end{itemize}

 \begin{figure}
\centering
\includegraphics[height=0.22\textwidth,width=0.90\textwidth]{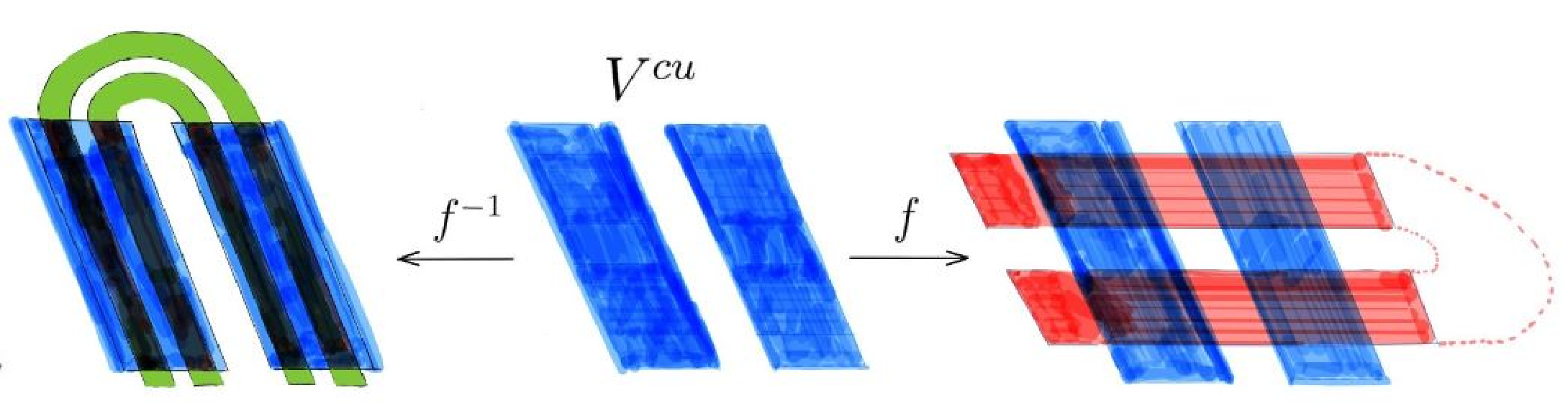}
\caption{The graph-invariant surface $V^{cu}$ in $R_1$ for $f$ (blue), its forward image $f(V^{cu})$ (red) and backward image $f^{-1}(V^{cu})$(green).}
\label{fig:graphinvariantsurface}
\end{figure}

\begin{prop}
\label{invariance}
If $\theta>0$ is sufficiently small and $f\in{\rm Diff}^2(\mathbb R^3)$ is 
sufficiently $C^2$-close to $f_0$,
there exists a unique
 graph-invariant surface $V^{cu}$ in $R_1$ for $f$. Moreover we have
 $\bigcap_{n=0}^{\infty}f^n(R_1)\subset V^{cu}$,
 and the defining function $\phi_1$ of $V^{cu}$ satisfies $\sum_{\alpha:1\leq|\alpha|\leq 2}\sup_{\pi_1(R_1)}|\partial^\alpha\phi_1|<\theta$.
Similarly, there exists a unique  graph-invariant surface $V^{cs}$
in $R_2$ for $f^{-1}$. Moreover we have
 $\bigcap_{n=0}^{\infty}f^{-n}(R_2)\subset V^{cs}$,
 and the defining function
 $\phi_2$ of $V^{cs}$ satisfies 
 $\sum_{\alpha:1\leq|\alpha|\leq 2}\sup_{\pi_2(R_2)}|\partial^\alpha\phi_2|<\theta$.
\end{prop}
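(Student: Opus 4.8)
The plan is to realize $V^{cu}$ as a normally attracting invariant manifold for a suitable auxiliary map and then invoke Theorem~\ref{persistence}. First I would work with the coupled horseshoe map $f_0$ itself, where the situation is explicit: on $R_1=A\cup D$ the map $f_0$ is affine with diagonal derivative, expanding the $x_u$-direction by $\lambda_{0,u}>1$, contracting $x_c$ by $\lambda_{0,c}<1/2$, and contracting $x_s$ by $\lambda_{0,s}$. The point of dimension reducibility, condition \eqref{d-reduce}, is precisely that $\lambda_{0,c}^2>\lambda_{0,s}$, so the $x_s$-direction contracts \emph{faster} than the square of the weakest contraction in the center-unstable plane $\{x_s=0\}$. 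Thus the plane $\{x_s=\text{const}\}\cap R_1$ through the invariant part of $\Gamma$ is $2$-normally attracting: in the notation of Section~\ref{normal}, take $V$ to be (the closure of) this plane, $TV$ the $(x_u,x_c)$-plane, $N^s$ the $x_s$-line, and check $\|D\varphi^n n^s\|/\|D\varphi^n v\|^{2}\leq \lambda_{0,s}^n/\lambda_{0,c}^{2n}=(\lambda_{0,s}\lambda_{0,c}^{-2})^n\to 0$ geometrically. Since $f_0$ does not literally map a bounded plane to itself, I would first pass to a modified map $\varphi_0$ that agrees with $f_0$ on a neighborhood of $\bigcap_{n\geq 0}f_0^n(R_1)$ and leaves some compact $C^1$ surface (a large disk in the $(x_u,x_c)$-plane, with a collar) invariant — a routine gluing using a bump function — so that Theorem~\ref{persistence} applies verbatim.

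Next, with $V=V_0$ the $C^2$ ($=C^r$, $r=2$) normally attracting manifold for $\varphi_0$, part (b) of Theorem~\ref{persistence} gives, for every $f$ that is $C^2$-close to $f_0$ (hence its modification $\varphi$ is $C^2$-close to $\varphi_0$), a unique nearby $C^2$ invariant manifold $\tilde V$. I would then define $V^{cu}$ to be the portion of $\tilde V$ lying over $\pi(R_1)$. That it is a graph over $\pi(R_1)$ — i.e.\ of the form $\{(x_u,x_c,\phi(x_u,x_c))\}$ for a $C^2$ function $\phi$ — follows because $\tilde V$ is $C^2$-close to the horizontal plane $V_0$, so its tangent planes lie in the center-unstable cone field $\mathcal C^{cu}(\theta)$, which forces the third coordinate to be a function of the first two; the $C^2$-smallness bound $\sum_{1\leq|\alpha|\leq 2}\sup|\partial^\alpha\phi|<\theta$ comes from the same $C^2$-closeness after shrinking the neighborhood $\mathcal V$, using $\theta<1/10$ for the displayed $1/10$ bound in the definition. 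The two graph-invariance inclusions $V^{cu}\cap f^{-1}(R_1)\subset f^{-1}(V^{cu})$ and $V^{cu}\cap f(R_1)\subset f(V^{cu})$ are exactly the statements that $\tilde V$ is $f$-invariant restricted to the region where $f^{\pm 1}$ still maps inside $R_1$ (so that $\varphi=f$ there). The inclusion $\bigcap_{n\geq 0}f^n(R_1)\subset V^{cu}$ holds because that forward-invariant set lies in the domain where $\varphi=f$, is $\varphi$-invariant, and Theorem~\ref{persistence}(a) says a neighborhood of $\tilde V$ is fibered by stable manifolds tangent to $N^s$; any point whose forward orbit stays in $R_1$ must lie on $\tilde V$ itself, since points off $\tilde V$ get pushed along their $x_s$-fiber and eventually leave. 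Uniqueness of $V^{cu}$ likewise follows from uniqueness of $\tilde V$ in Theorem~\ref{persistence}(b) together with the cone-field characterization.

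Part~(${\rm a}^*$) is the mirror image: on $R_2=B^*\cup C^*$ the map $f_0^{-1}$ is affine with the roles of $x_u$ and $x_s$ interchanged, and the relevant hypothesis is $\mu_{0,u}>\mu_{0,c}^{2}$, which makes the $x_u$-direction the fast-contracting normal direction for $f^{-1}$, so the plane $\{x_u=\text{const}\}$ is $2$-normally attracting for (a modification of) $f_0^{-1}$; applying the same argument produces $V^{cs}$ as a graph over $\pi^*(R_2)$ with defining function $\phi^*$, $C^2$-small. I would state both at once by symmetry rather than repeating the argument. One can take a single neighborhood $\mathcal V$ that works for both by intersecting.

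\textbf{Main obstacle.} I expect the technical heart to be the reduction from $f_0$ (which maps $R_1$ across itself but does \emph{not} preserve any compact surface) to an auxiliary map $\varphi_0$ preserving a compact $C^2$ surface, done in such a way that (i) the normal-attraction inequalities with exponent $r=2$ genuinely hold for $\varphi_0$ — here one must be careful that the modification does not spoil the rates, which is where the strict inequalities in \eqref{condition1}--\eqref{condition2} rather than \eqref{d-reduce} are used, allowing an $\epsilon$-cushion — and (ii) on the dynamically relevant region $\bigcap_{n}f^n(R_1)$ and its neighborhood one still has $\varphi=f$, so that all conclusions transfer back to $f$. A secondary point requiring care is extracting the uniform $C^2$-smallness of $\phi$ (the $\partial^\alpha\phi$ bound, including second derivatives) from the $C^2$-closeness of $\tilde V$ to the flat plane: this is where $r=2$ (not just $r=1$) normal hyperbolicity is essential, since Theorem~\ref{persistence} only gives a $C^r$ manifold $C^r$-close to $V_0$ for $\tilde\varphi$ that is $C^r$-close to $\varphi$ — hence the standing assumption that $f_0\in{\rm Diff}^2$ and the appearance of "$2$-normally attracting" rather than "$1$-normally attracting."
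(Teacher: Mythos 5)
Your construction of $V^{cu}$ follows essentially the same route as the paper: use dimension reducibility to check that the $(x_u,x_c)$-plane through the relevant piece of the dynamics is $2$-normally attracting (the ratio $(\lambda_{0,s}\lambda_{0,c}^{-2})^n\to0$), glue $f_0$ to an auxiliary diffeomorphism preserving a compact surface, apply Theorem~\ref{persistence}, and take the graph portion over $\pi(R_1)$ with the $C^2$-smallness of $\phi$ coming from $C^2$-closeness of the perturbed manifold. Up to that point your proposal matches the paper's proof.

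The genuine gap is in the inclusion $\bigcap_{n\geq0}f^n(R_1)\subset V^{cu}$, which is where the paper spends most of its effort. First, $\bigcap_{n\geq0}f^n(R_1)$ is the set of points whose \emph{backward} orbits stay in $R_1$ (it is the local unstable set of the horseshoe), not the set of points whose forward orbit stays in $R_1$; and since $\tilde V$ is normally \emph{attracting}, points off $\tilde V$ are pushed \emph{toward} it under forward iteration, so the mechanism you invoke (``points off $\tilde V$ get pushed along their $x_s$-fiber and eventually leave'') runs in the wrong time direction as written. Even after reversing time, your one-sentence argument omits two ingredients that the paper supplies with separate lemmas: (i) one must first show that the whole set $\bigcap_{n\geq0}f^n(R_1)$, for every $f$ near $f_0$, lies inside the stable-fibered neighborhood $W(g_f)$ of the graph (the paper's Lemma~\ref{lem-1}), and this uses the specific geometric hypothesis that $D$ is a translate of $A$ purely in the $x_u$-direction, so that for $f_0$ the set lies in the single plane $x_s=p_s$ contained in the fibered slab; without this, a point $p$ of the set need not lie on any stable fiber at all, and your argument cannot start; (ii) when the backward iterates of the fiber segment joining $p$ to its base point $q\in{\rm graph}(\phi(g_f))$ are expanded and first leave $W(g_f)$, that is not by itself a contradiction --- the backward orbit of $p$ is only known to stay in $R_1$, and the fibration is invariant only where the modified map agrees with $f$ --- so the paper needs the geometric Lemma~\ref{lem-2} (a stable-cone curve through $\bigcap_{n=0}^2f^n(R_1)$ cannot meet $R_1\setminus f(R_1)$) together with the cone invariance of Proposition~\ref{cone-cor} to exclude the escape. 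A smaller instance of the same gap occurs in your uniqueness claim: a graph-invariant surface for $f$ is defined intrinsically, so before invoking the uniqueness clause of Theorem~\ref{persistence} one must show (as the paper does, by rerunning the contradiction argument) that \emph{any} graph-invariant surface contains $\bigcap_{n\geq0}f^n(R_1)$; the ``cone-field characterization'' you allude to is not spelled out.
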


\begin{proof}
Let $\theta>0$ be sufficiently small,
let 
$\lambda_u$, $\lambda_c$, $\lambda_c'$, $\lambda_s$, $\nu_u$, $\nu_c$, $\nu_c'$, $\nu_s\in\mathbb R$
satisfy \eqref{condition10}, \eqref{condition20}, let
 $U_1\supset R_1$, $U_2\supset R_2$
be disjoint open sets and let $f$ be sufficiently $C^2$-close to $f_0$
for which the conclusions of Propositions~\ref{tangent} and \ref{cone-cor} hold.
We only give a proof of the statements on $V^{cu}$ since that on $V^{cs}$ is analogous.

Let $x_{0,s}$ denote the $x_s$-coordinate of the fixed saddle of $f_0$ in $A$.
 Since $f_0$ is dimension-reducible, the first inequality in \eqref{d-reduce} implies that
there exist a
$C^2$ surface $M$ diffeomorphic to the two-dimensional sphere 
 and $\tilde f_0\in{\rm Diff}^2(\mathbb R^3)$
 such that
 $\pi_1(R_1)\times \{x_{0,s}\}\subset M$,
  $\tilde f_0|_{U_1}=f_0|_{U_1}$,
and $M$ is $2$-normally attracting for
$\tilde f_0$.
 By Theorem~\ref{persistence} applied to $(\tilde f_0,M)$,
 there exist a $C^2$ neighborhood $\mathcal Y$ of $\tilde f_0$,
 an open set $\Delta\subset\mathbb R^2$ containing 
   $\pi_1(R_1)$, a constant $\delta_0>0$,
 and for each $g\in\mathcal Y$ there exist a
 $2$-normally attracting $C^2$ submanifold $V(g)$ 
  for $g$, 
  and a $C^2$ function $\phi(g)\colon \Delta\to \mathbb R$ 
  with the following properties:
  \begin{itemize}
      \item[(i)]
  ${\rm graph}(\phi(g))=\{(x_u,x_c,\phi(g)(x_u,x_c))\in\mathbb R^3\colon (x_u,x_c)\in \Delta\}\subset V(g)$.
 
 \item[(ii)]
 $\sum_{\alpha:1\leq|\alpha|\leq 2}\sup_{\Delta}|\partial^\alpha\phi(g)|<\theta$.
 
 \item[(iii)]
For each $p\in {\rm graph}(\phi(g))$
there exists a 
$C^1$ curve $W^{ss}(p)$ through $p$ which is 
tangent to $\mathcal C^s(\theta)$ such that 
 $g(p)\in {\rm graph}(\phi(g))$
implies $g (W^{ss}(p))\subset W^{ss}(g(p))$, and
\begin{equation}\label{include} \pi_1(R_1)\times(x_{0,s}-\delta_0,x_{0,s}+\delta_0)\subset W^{ss}({\rm graph}(\phi(g)))\subset U_1,\end{equation}
where
$W^{ss}({\rm graph}(\phi(g)))=\bigcup\{W^{ss}(p)\colon p\in {\rm graph}(\phi(g))\}$.
\end{itemize}

For each $f$ which is sufficiently $C^2$-close to $f_0$, 
we choose $g_f\in\mathcal Y$ such that $f|_{U_1}=g_f|_{U_1}$,
and set \[V^{cu}={\rm graph}(\phi(g_f))\cap R_1.\]
Then
$V^{cu}$ is a graph-invariant surface in $R_1$ for $f$
whose defining function is the restriction of $\phi(g_f)$ to $\pi_1(R_1)$.

\begin{lemma}\label{lem-1}
We have $\bigcap_{n=0}^{\infty}f^n(R_1)\subset W^{ss}({\rm graph}(\phi(g_f)))$.
\end{lemma}
\begin{proof}
Since the block $D$ is a translate of $A$ in the $x_u$-direction,
$\bigcap_{n=0}^{\infty}f_0^n(R_1)$ is contained in $\pi_1(R_1)\times\{x_{0,s}\}$.
From this and \eqref{include} the desired inclusion follows.
\end{proof}

\begin{lemma}\label{lem-2}
Any $C^1$ curve which is tangent to $\mathcal C^s(\theta)$ and intersects
$\bigcap_{n=0}^2f^n(R_1)$ does not intersect $V^{cu}\setminus f(R_1)$.\end{lemma}
\begin{proof}
Any $C^1$ curve which is tangent to $\mathcal C^s(2\theta)$ and intersects
$\bigcap_{n=0}^2f_0^n(R_1)$ does not intersect $(R_1\cap\{x_s=x_{0,s}\})\setminus f_0(R_1)$. This
 implies the assertion of the lemma.
\end{proof}

We now prove $\bigcap_{n=0}^{\infty}f^n(R_1)\subset V^{cu}$ 
by way of contradiction.
Let $p\in\bigcap_{n=0}^{\infty}f^n(R_1)$ and suppose
 $p\notin V^{cu}$. From Lemma~\ref{lem-1},
there exists $q\in{\rm graph}(\phi(g_f))$
such that $p\in W^{ss}(q)$.
Since $p\notin V^{cu}$ we have $p\neq q$.
Let $\sigma$ denote the $C^1$ curve in $W^{ss}(q)$ joining $p$ and $q$.
 The images of $\sigma$
 under the iteration of $f^{-1}$ are tangent to $\mathcal C^s(\theta)$ by Proposition~\ref{tangent}, and expanded by factor $\lambda_s^{-1}$
 by Proposition~\ref{cone-cor}. 
  Let $n_0\geq1$ denote the minimal integer such that
 $f^{-n_0}(\sigma)\notin W^{ss}({\rm graph}(\phi(g_f)))$. 
 Lemma~\ref{lem-1} implies $f^{-n}(p)\in W^{ss}({\rm graph}(\phi(g_f)))$ for all $n\geq0$, and so
 we must have 
 $\{f^{-n_0+1}(q)\}\setminus\{f^{-n_0}(q)\}\subset {\rm graph}(\phi(g_f))$. 
It follows that $f^{-n_0+1}(p)\in \bigcap_{n=0}^2f^n(R_1)$
and $f^{-n_0+1}(q)\in V^{cu}\setminus f(R_1)$. In particular,
$f^{-n_0+1}(\sigma)$ intersects $\bigcap_{n=0}^2f^n(R_1)$
 and $V^{cu}\setminus f(R_1)$. 
  By Lemma~\ref{lem-2},
 $f^{-n_0+1}(\sigma)$ is not tangent to $\mathcal C^s(\theta)$,
 a contradiction to Proposition~\ref{tangent}.

To show the uniqueness, 
 let $V$, $V'$ be 
 graph-invariant surfaces in $R_1$ for $f$.
 The argument in the previous paragraph shows that $V\cap V'\supset\bigcap_{n=0}^{\infty}f^n(R_1)$.
 The uniqueness in Theorem~\ref{persistence} implies
 $V=V'$.
 \end{proof}

\subsection{Construction of Cantor sets}\label{Cantorset}
 Throughout the rest of this section, let $\theta>0$ be sufficiently small and let $f$
 be sufficiently $C^2$-close to $f_0$ for which the conclusions of Propositions~\ref{tangent}, \ref{cone-cor}, \ref{invariance} hold.
   Let $P$, $Q$ denote the fixed saddles of $f$ in $A$, $C^*$ respectively. 
Let $W^s_{\rm loc}(P)$
denote the connected component of 
$W^s(P)\cap A$ containing $P$, and
let $W^u_{\rm loc}(Q)$ denote the connected component of $W^u(Q)\cap C^*$ 
containing $Q$.
Put
 \[\gamma_c=W^s_{\rm loc}(P)\cap V^{cu}
\quad\text{and}\quad \sigma_c=W^u_{\rm loc}(Q)\cap V^{cs}.\]
See FIGURE~\ref{fig:heteroclinic-intersection2}.
The curve $\gamma_c$ is $C^2$ since it is a transverse intersection of two $C^2$ surfaces $W^s_{\rm loc}(P)$ and $V^{cu}$.
For the same reason, $\sigma_c$
is a $C^2$ curve.
We fix two $C^2$ curves 
$\tilde\gamma_c\supset\gamma_c$,
$\tilde\sigma_c\supset\sigma_c$
such that 
$\tilde\gamma_c\setminus\gamma_c$
and
$\tilde\sigma_c\setminus\sigma_c$
have two connected components both of infinite length, and view $\gamma_c\cap\Gamma$ and $\sigma_c\cap\Sigma$ as Cantor sets with respect to the induced metrics
on $\tilde\gamma_c$ and $\tilde\sigma_c$.

\begin{figure}
\centering
\includegraphics[height=0.45\textwidth,width=0.75\textwidth]
{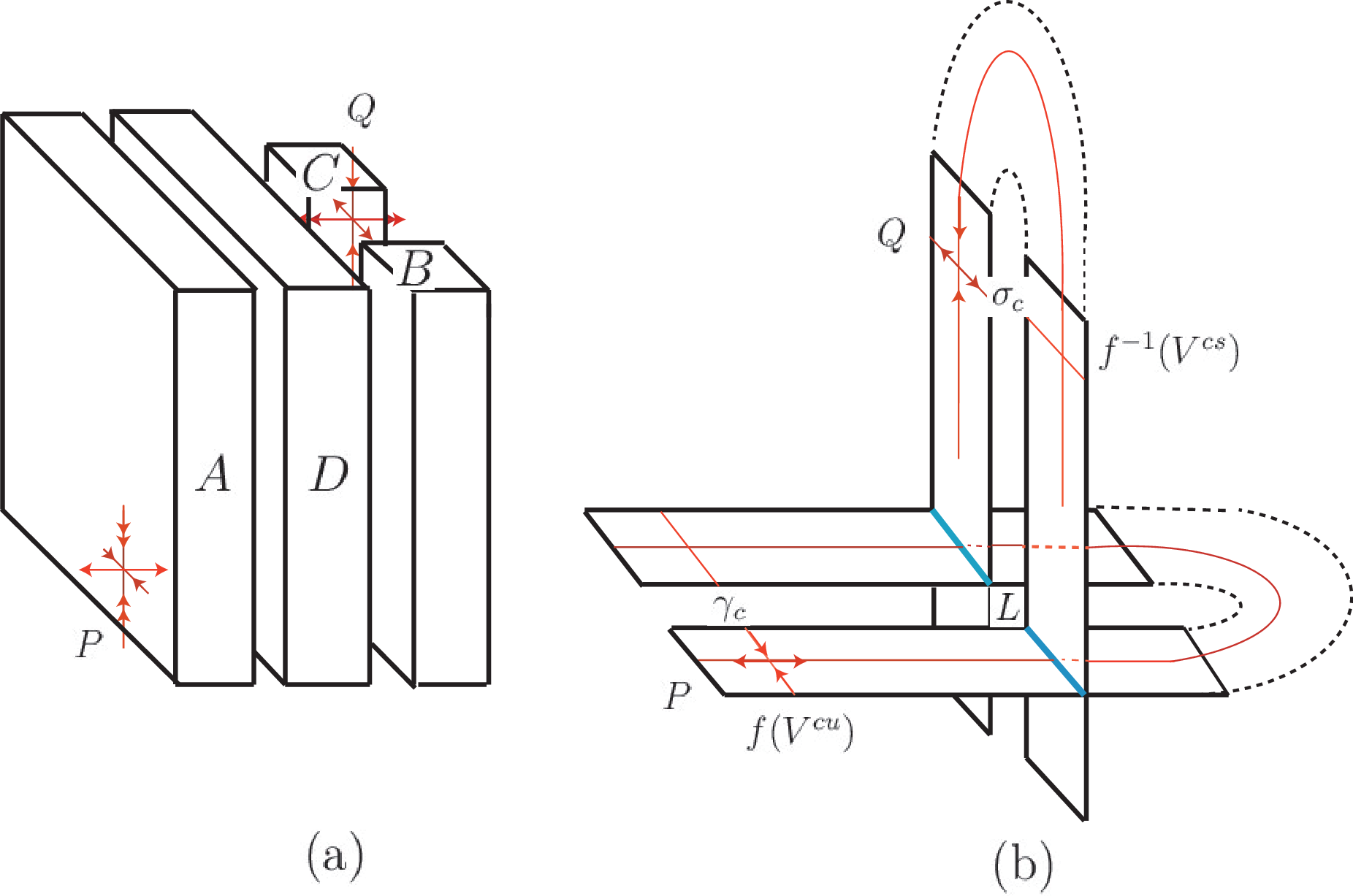}
\caption{(a) the fixed saddles $P$ and $Q$; (b) the surfaces $f(V^{cu})$ and $f^{-1}(V^{cs})$.}
\label{fig:heteroclinic-intersection2}
\end{figure}

For $p\in f(V^{cu})$,
let $\mathcal F^u(p)$ denote the connected component of  $W^u(p)\cap f(V^{cu})$ which contains $p$. 
For $q\in f^{-1}(V^{cs})$,
let $\mathcal F^s(q)$ denote the connected component of $W^s(q)\cap f^{-1}(V^{cs})$ which contains $q$.
Set
\[\mathcal F^u(\Gamma)=\bigcup_{ p\in\gamma_c\cap\Gamma }\mathcal F^u(p)\quad \text{and}\quad \mathcal F^s(\Sigma)=\bigcup_{q\in\sigma_c\cap\Sigma}\mathcal F^s(q).\]
By Proposition~\ref{invariance}, for any $i\geq0$ we have $f^{-i}(\mathcal F^u(\Gamma))\subset \bigcap_{n=0}^{\infty}f^n(R_1)\subset V^{cu}$ and $f^{i}(\mathcal F^s(\Sigma))\subset \bigcap_{n=0}^{\infty}f^{-n}(R_2)\subset V^{cs}$, namely 
\begin{equation}\label{include-f}\mathcal F^u(\Gamma)\subset\bigcap_{n=0}^\infty f^n(V^{cu})\quad\text{and}\quad\mathcal F^s(\Sigma)\subset\bigcap_{n=0}^\infty f^{-n}(V^{cs}).
\end{equation}
The set 
\[L=f(V^{cu})\cap f^{-1}(V^{cs})\]
contains $\mathcal F^u(\Gamma)\cap \mathcal F^s(\Sigma)$, and
 consists of two connected 
components.
Since any connected component of $L$ is a transverse intersection between two $C^{2}$ surfaces,
it is a $C^{2}$ curve. By Proposition~\ref{tangent}, 
the connected components of $L$ are tangent to $\mathcal C^{c}(\theta)$.
We fix a $C^2$ curve $\tilde L$ 
which is tangent to $\mathcal C^{c}(\theta)$ and contains $L$ so that $\tilde L\setminus L$
has two connected components of infinite length.

 We now define two projections $\Pi^u\colon \mathcal F^u(\Gamma)\to L$ and
$\Pi^s\colon \mathcal F^s(\Sigma)\to L$
by 
\begin{equation}\label{proj-us}\Pi^u(p)\in L\cap\mathcal{F}^u(p)\quad\text{and}\quad
\Pi^s(q)\in L\cap\mathcal{F}^s(q),\end{equation}
and set
\begin{equation}\label{omega12}\Omega_1=\Pi^u(\gamma_c\cap \Gamma)
\quad \text{and}\quad \Omega_2=\Pi^s(\sigma_c\cap \Sigma)
.\end{equation}
We view $\Omega_1$ and $\Omega_2$ as Cantor sets with respect to the induced metric on $\tilde L$, with thicknesses $\tau(\Omega_1)$ and $\tau(\Omega_2)$. Clearly,
 $\Omega_1$ and $\Omega_2$ are interleaved and satisfy $\Omega_1\subset W^u(\Gamma)$, $\Omega_2\subset W^s(\Sigma)$, $\Omega_1\cap \Omega_2\subset \mathscr{H}$.

\subsection{Symbolic coding and description of Cantor sets}\label{s-c}
In order to further describe the structure of the set
$\gamma_c\cap\Gamma$,
it is convenient to use a coding on two symbols $f(A),f(D)$.
For each $n\geq1$ and
a word $\omega^-=\omega_{-n}\cdots \omega_{-1}$ in $\{f(A),f(D)\}^n$, define 
an {\it $n$-cylinder in $\gamma_c$} by
\[[\omega^-]=\{p\in\gamma_c\colon f^{-i}(p)\in \omega_{-i-1}\quad\text{for } 0\leq  i\leq n-1\}.\]
Using 
 the condition $V^{cu}\cap f(R_1)\subset f(V^{cu})$ inductively, one can show that 
 $f^{-n}([\omega^-])\subset V^{cu}$ holds for any $n\geq1$ and any $\omega^-\in\{f(A),f(D)\}^n$. From this and Proposition~\ref{tangent}, for any $n\geq1$ and any $\omega^-\in\{f(A),f(D)\}^n$, 
$f^{-n}([\omega^-])$
is tangent to $\mathcal C^c(\theta)$  and stretches across one of the connected components of $V^{cu}$ (see FIGURE~\ref{fig:cylinder-im}).
 Hence,
each $n$-cylinder $[\omega^-]$ contains exactly two $(n+1)$-cylinders
$[ f(A)\omega^-]$ and $[ f(D)\omega^-]$,
and the set $[\omega^-]\setminus([ f(A)\omega^-]\cup [ f(D)\omega^-])$
has exactly three connected components. 

\begin{figure}
\centering
\includegraphics[height=0.5\textwidth,width=0.7\textwidth]
{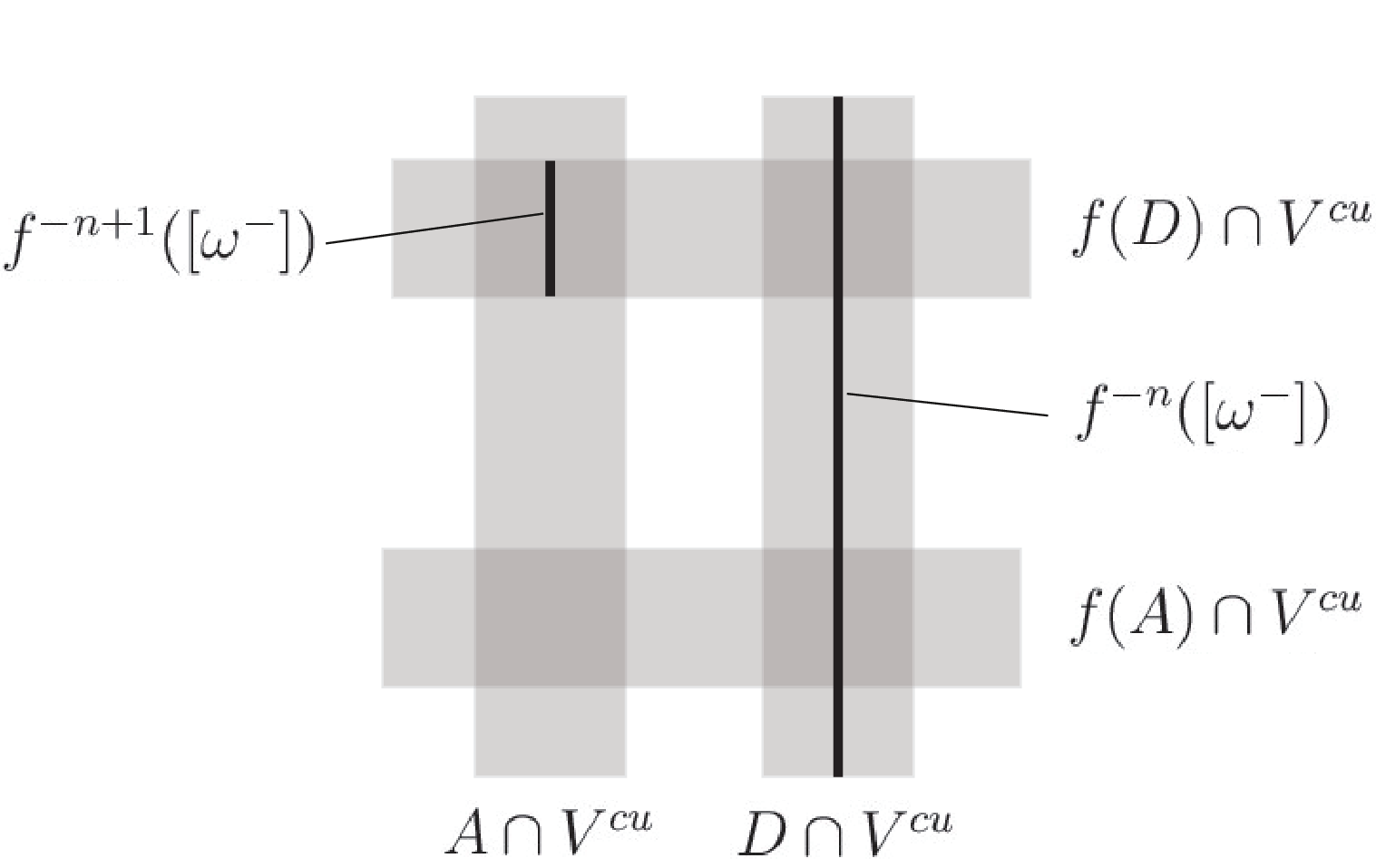}
\caption{The image of an $n$-cylinder $[\omega^-]$ in $\gamma_c$,
 $\omega^-=\omega_{-n}\cdots\omega_{-1}$, $\omega_{-n}=f(A)$.}
 \label{fig:cylinder-im}
\end{figure}

Similarly, each point in $\sigma_c\cap\Sigma$ is uniquely coded by a sequence of two symbols $f^{-1}(B^*)$ and $f^{-1}(C^*)$ which records the history of the forward orbit of the point.
For each $n\geq1$ and a word $\omega^+=\omega_1\cdots \omega_{n}$ in
 $\{f^{-1}(B^*),f^{-1}(C^*)\}^n$, 
define 
an {\it $n$-cylinder in $\sigma_c$} by
\[[\omega^+]=\{q\in\sigma_c\colon f^{i}(q)\in \omega_{i+1}\quad\text{for }0\leq i\leq n-1\}.\]

It may be useful to view $\gamma_c\cap\Gamma$  
as constructed by an inductive procedure
like the middle third Cantor set: starting from $\gamma_c^0=\gamma_c\cap f(V^{cu})$, at step $n\geq1$ we construct an $n$-th approximation $\gamma_c^{n}= \gamma_c\cap\bigcap_{i=0}^{n} f^{i}(f(V^{cu}))$
by deleting from $f^{-n}(\gamma_c^{n-1})$
all those points not contained in $f(V^{cu})$, and then applying $f^n$ to the remaining set. 
For each $n\geq1$, 
 $\gamma_c^{n-1}$ consists of $2^{n}$ connected components which are $n$-cylinders
 (see FIGURE~\ref{fig:cylinder-im-old}),
 and we have $\gamma_c\cap\Gamma=\bigcap_{n=0}^{\infty} \gamma_c^{n}$.
Unlike the middle third Cantor set, we must delete from each connected component of $\gamma_c^{n-1}$ some
neighborhoods of its boundary points.
Hence, any gap of $\gamma_c\cap\Gamma$
is not created in a finite step of the induction, and
a careful analysis is required
to estimate the thickness of $\gamma_c\cap\Gamma$.

For each bounded gap $G$ of $\gamma_c\cap\Gamma$, we associate a non-negative integer $t_G$ as follows.
If $G$ is the unique bounded gap of $\gamma_c\cap\Gamma$ that intersect both $f(A)$ and $f(D)$, we set $t_G=0$.
Otherwise, either
$G\subset f(A)$ or $G\subset f(D)$ holds.
We set \[t_{G}=
\max\{n\geq1\colon\text{there exists an $n$-cylinder in $\gamma_c$ containing $G$}\}.\]
If $t_G\geq1$, then
 $f^{-i}(G)\subset f(V^{cu})$ for $0\leq i\leq t_G-1$ and $f^{-t_G}(G)\not\subset f(V^{cu})$.

If $t_G\geq1$,
let $\omega^-(G)=\omega_{-t_G}\cdots\omega_{-1}$ denote the word in $\{f(A),f(D)\}^{t_G}$ such that 
$G\subset[\omega^-(G)]$.
If $t_G=0$, we define $\omega^-(G)$ to be the empty word in $\bigcup_{n=1}^\infty\{f(A),f(D)\}^{n}$,
and set $[\omega^-(G)]=\gamma_c$.
Any bounded gap $G'$ of $\gamma_c\cap\Gamma$ in $[\omega^-(G)]$ other than $G$ satisfies $t_{G'}>t_G$.
Note that $f^{-t_G}([\omega^-(G)])$  stretches across 
$A\cap V^{cu}$ or $D\cap V^{cu}$. 
Let $\underline{G}$ denote 
 the maximal curve in  $G$ such that  $f^{-t_G}(\underline{G})\cap f(V^{cu})=\emptyset$.
 The set $G\setminus\underline{G}$
has exactly two connected components. If $t_G\geq1$, 
 $\underline{G}$ is
the middle connected component of
 $[\omega^-(G)]\setminus([ f(A)\omega^-(G)]\cup [ f(D)\omega^-(G)])$. 

Similarly, for each bounded gap $H$ of $\sigma_c\cap\Sigma$ we associate a non-negative integer $u_H$ as follows.
If $H$ is the unique bounded gap of $\sigma_c\cap\Sigma$ that intersect both $f^{-1}(B^*)$ and $f^{-1}(C^*)$, we set $u_H=0$.
Otherwise, 
we set \[u_H=
\max\{n\geq1\colon\text{there exists an $n$-cylinder in $\sigma_c$ containing $H$}\}.\]
If $u_H\geq1$,
let $\omega^+(H)=\omega_1\cdots\omega_{u_H}$ denote the word in 
$\{f^{-1}(B^*),f^{-1}(C^*)\}^{u_H}$ such that 
$H\subset[\omega^+(H)]$.
If $u_H=0$, we define $\omega^+(H)$ to be the empty word in $\bigcup_{n=1}^\infty\{f^{-1}(B^*),f^{-1}(C^*)\}^{n}$,
and set $[\omega^+(H)]=\sigma_c$. 
Let $\underline{H}$ denote 
 the maximal curve in  $H$ such that  $f^{u_H}(\underline{H})\cap f^{-1}(V^{cs})=\emptyset$.

\begin{figure}
\centering
\includegraphics[height=0.2\textwidth,width=0.75\textwidth]
{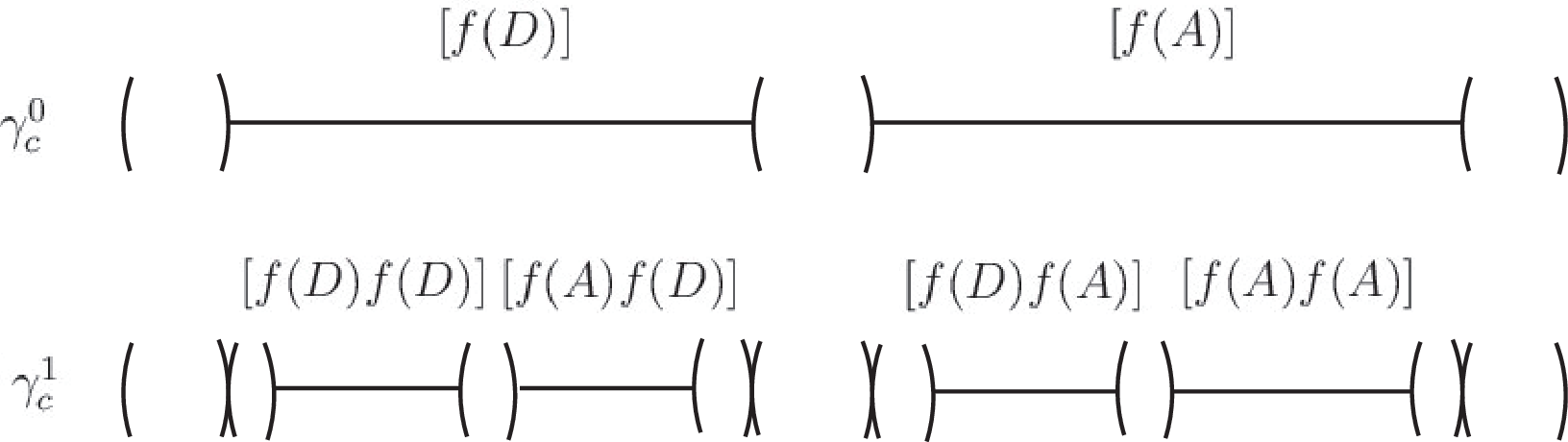}
\caption{All the $1$- and $2$-cylinders in $\gamma_c$.}
 \label{fig:cylinder-im-old}
\end{figure}

\section{Estimates on thicknesses and proofs of the main results}
In this last section we complete the proofs of Theorems~A and C.
In Section~\ref{bddis}, we establish a bounded distortion result
on iterations of curves in the graph-invariant surfaces constructed in Section~3. 
In Section~\ref{section-gap},
we estimate local thicknesses of the cross sections $\gamma_c\cap\Gamma$ and
$\sigma_c\cap\Sigma$.
In Section~\ref{lip-s}
we show that
these estimates are well-preserved under the projections $\Pi^u$ and $\Pi^s$ in \eqref{proj-us}.
In Section~\ref{section-lip} we
estimate the thicknesses of the Cantor sets $\Omega_1$ and $\Omega_2$
in \eqref{omega12}, and
complete the proofs of Theorems~A and C in Sections~\ref{pf-thma} and \ref{pf-thmb} respectively.
Computational proofs are postponed to Sections~\ref{pf1} and \ref{pf2}.

\subsection{Bounded distortions on curves in graph-invariant surfaces}\label{bddis}
Let $\theta>0$ be sufficiently small and 
let $f\in{\rm Diff}^2(\mathbb R^3)$ be sufficiently $C^2$-close to $f_0$
for which the conclusions of Propositions~\ref{tangent}, \ref{cone-cor}, \ref{invariance} hold.
A $C^2$ curve $\gamma$ in $V^{cu}\cup f(V^{cu})$
is called {\it $cu$-admissible}
if it is tangent to $\mathcal C^{c}(\theta)$
and the curvature of $\pi_1(\gamma)$ is everywhere at most
$\theta$. 
Similarly,
a $C^2$ curve $\sigma$ in $V^{cs}\cup f^{-1}(V^{cs})$
is called {\it $cs$-admissible}
if it is tangent to $\mathcal C^{c}(\theta)$
and the curvature of $\pi_2(\sigma)$ is everywhere at most
$\theta$.

\begin{prop}\label{dist-c}
  For any $K\in(1,2)$ there exists $\theta\in(0,1/10)$ 
  such that if $f\in{\rm Diff}^2(\mathbb R^3)$ is sufficiently $C^2$-close to $f_0$, then
for any $n\geq1$ and any $cu$-admissible curve $\gamma$ in $\bigcap_{i=0}^{n} f^{i}(V^{cu})$,
$f^{-n}(\gamma)$ is a $cu$-admissible curve and 
\begin{equation}\label{dist-c-eq1}\sup_{p,q\in\gamma}\frac{\|Tf^{-n}|_{T_{p}
\gamma}\|}{\|Tf^{-n}|_{T_{q}\gamma}\|}\leq K.\end{equation}
Similarly, for any $n\geq1$ and any $cs$-admissible curve $\sigma$ in 
 $\bigcap_{i=0}^{n} f^{-i}(V^{cs})$,
$f^{n}(\sigma)$ is a $cs$-admissible curve and \begin{equation}\label{dist-c-eq2}\sup_{p,q\in\sigma}\frac{\|Tf^{n}|_{T_{p}
\sigma}\|}{\|Tf^{n}|_{T_{q}\sigma}\|}\leq K.\end{equation}
\end{prop}

A proof of Proposition~\ref{dist-c} is given in Section~\ref{pf1}.\medskip

 \noindent{\bf  (Standing hypotheses for the rest of the paper)}: 
 $K\in(1,2)$ is sufficiently close to $1$, $\theta>0$ is sufficiently small,  $\lambda_u$, $\lambda_c$, $\lambda_c'$, $\lambda_s$, $\nu_u$, $\nu_c$, $\nu_c'$, $\nu_s\in\mathbb R$ satisfy \eqref{condition10}, \eqref{condition20},
and 
 $f\in{\rm Diff}^2(\mathbb R^3)$ is sufficiently $C^2$-close to $f_0$
for which the conclusions of Propositions~\ref{tangent}, \ref{cone-cor}, \ref{invariance}, \ref{dist-c} hold. In addition, $\lambda_c-\lambda_c'$ and 
 $\nu_c-\nu_c'$ are sufficiently small,
which means that $\lambda_c$, $\lambda_c'$ are sufficiently close to $\lambda_{0,c}$ and $\nu_c$, $\nu_c'$ are sufficiently close to $\nu_{0,c}$.

\subsection{Lower bounds on local thicknesses of the cross sections}\label{section-gap}
As described in Section~\ref{s-c}, we will consider the backward iteration of $\gamma_c$ and the forward iteration of $\sigma_c$.
Let $\ell$ denote the arc-length measure on $L\cup\bigcup_{n=0}^\infty f^{-n}(\gamma_c)
\cup\bigcup_{n=0}^\infty f^{n}(\sigma_c)$ with respect to the induced metric.
\begin{prop}\label{core}
If $\delta>0$ and $f\in{\rm Diff}^2(\mathbb R^3)$ is 
sufficiently $C^2$-close to $f_0$,
then for any $x\in \gamma_c\cap\Gamma$ which is a boundary point of a bounded gap of $\gamma_c\cap\Gamma$,
\[\tau(\gamma_c\cap\Gamma,x)> a_1-\delta.\]
Similarly, for any $y\in \sigma_c\cap\Sigma$ which is a boundary point of a bounded gap of $\sigma_c\cap\Sigma$,
\[\tau(\sigma_c\cap\Sigma,y)> a_2-\delta.\]
\end{prop}

\begin{figure}
\centering
\includegraphics[height=0.5\textwidth,width=0.7\textwidth]
{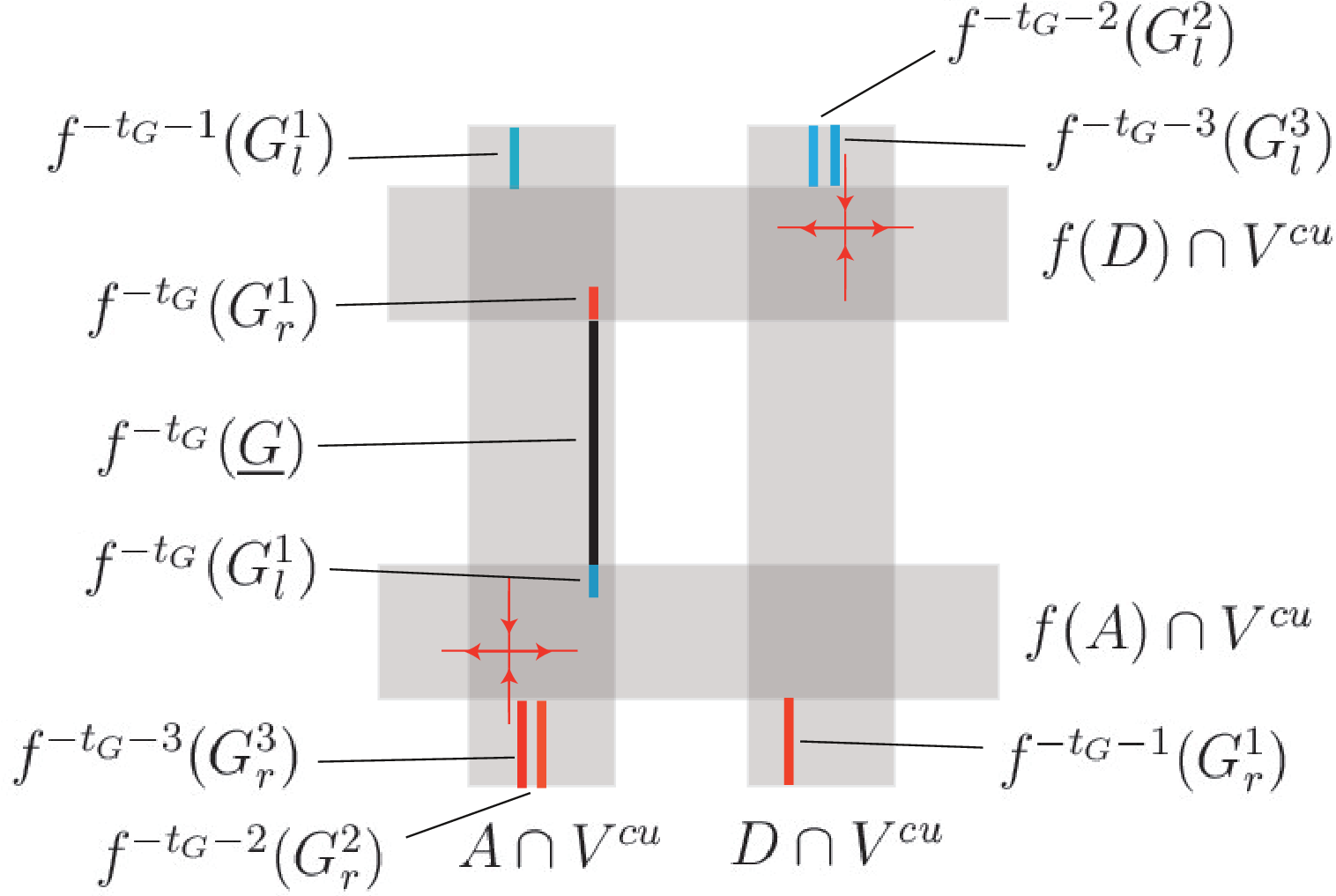}
\caption{Locations of $f^{-t_G-n}(G_r^n)$, $f^{-t_G-n}(G_l^n)$ $(0\leq n\leq3)$ and 
the fixed saddles in $A$, $D$.}
 \label{fig:gap-im}
\end{figure}

\begin{proof}
 Since
$\gamma_c$ is a transverse intersection of two $C^2$ surfaces whose derivatives up to the second order can be made arbitrarily small by taking $f$ sufficiently $C^2$-close to $f_0$, we may assume $\gamma_c$ is a $cu$-admissible curve. For the same reason, we may assume $\sigma_c$ is a $cs$-admissible curve.

To proceed, we need two lemmas on lengths of gaps.
\begin{lemma}\label{gap-com}
 For any bounded gap $G$ of $\gamma_c\cap\Gamma$ we have
\[ \left|\frac{\ell(f^{-t_G}(\underline{G}))}{|E_c^*|-
2|A_c^*|}-1\right|
   \leq \theta\quad
   \text{and}\quad \left|\frac{\ell(f^{-t_G}(G\setminus\underline{G}))}{\kappa_1|A_c^*| }-1\right|\leq 2\theta.\]
  Similarly, for
any bounded gap $H$ of $\sigma_c\cap\Sigma$ we have
   \[
   \left| \frac{\ell(f^{u_H}(\underline{H}))}{|F_c|-2|B_c|}-1\right|
   \leq \theta\quad\text{and}\quad \left|\frac{\ell(f^{u_H}(H\setminus\underline{H}))}{\kappa_2|B_c| }-1\right|\leq 2\theta.\]
   \end{lemma}
   \begin{proof}
Let $G$ be a bounded gap of $\gamma_c\cap\Gamma.$
Since $\gamma_c$ is tangent to
$\mathcal C^{c}(\theta)$, so is
$G$. From Proposition~\ref{tangent}, the curve
$f^{-t_G}(\underline{G})$ is tangent to $\mathcal C^{c}(\theta)$, and
 joins two points in the boundaries of the two connected components of $f(V^{cu})$.
 This implies the first inequality in Lemma~\ref{gap-com}.

For each $n\geq1$, we define 
\[G_r^n=[f(A)^{n-1}f(D)\omega^-(G)]\setminus[f(A)^{n}f(D)\omega^-(G)]\cup[f(D)f(A)^{n-1}f(D)\omega^-(G)],\]
and
\[G_l^n=[f(D)^{n-1}f(A)\omega^-(G)]\setminus
([f(A)f(D)^{n-1}f(A)\omega^-(G)]
\cup [f(D)^{n}f(A)\omega^-(G)]).\]
We have
\[G\setminus\underline{G}=\bigcup_{n=1}^\infty (G_r^n
\cup  G_l^n).\]
From Proposition~\ref{tangent}, $f^{-i}(G_r^n)$,
$f^{-i}(G_l^n)$ are
tangent to $\mathcal C^{c}(\theta)$ for $0\leq i\leq t_G+n$, and
$f^{-t_G-n}(G_r^n)$,
$f^{-t_G-n}(G^n_l)$ stretch across the connected components of $V^{cu}\setminus f(V^{cu})$ intersecting $A_u\times\partial A_c\times A_s$ or  $D_u\times\partial D_c\times D_s$ as in
FIGURE~\ref{fig:gap-im}. This implies
\[\left|\frac{\ell(f^{-t_G-n}(G_r^n\cup G^n_l))}{|A_c|-|E_c^*|}-1\right|\leq\theta.\]
From \eqref{cone-eq2} in Proposition~\ref{cone-cor}(a),
we have
\[{\lambda_c'}^n\leq \frac{\ell(f^{-t_G}(G_r^n))}{\ell(f^{-t_G-n}(G_r^n))}\leq\lambda_c^n\quad\text{and}\quad{\lambda_c'}^n\leq \frac{\ell(f^{-t_G}(G_l^n))}{\ell(f^{-t_G-n}(G_l^n))}\leq\lambda_c^n.\]
Combining these inequalities yields
\begin{equation}\label{s4g}(1-\theta){\lambda_{c}'}^{n}\leq\frac{\ell(f^{-t_G}(G_r^n\cup G^n_l))}{|A_c|-|E_c^*|}\leq(1+\theta)\lambda_c^n.\end{equation}
If $\lambda_c-\lambda_c'$ is sufficiently small, then
summing \eqref{s4g} over all $n\geq1$ 
and combining the result with $\lambda_{0,c}/(1-\lambda_{0,c})
=|A_c^*|/(|A_c|-|A_c^*|)$,
we obtain the second inequality in 
Lemma~\ref{gap-com}. Proofs of the remaining two are analogous.
\end{proof}

\begin{lemma}\label{thick-new}
For any pair $G$, $G'$ of different bounded gaps  
of $\gamma_c\cap\Gamma$
such that 
$[\omega^-(G)]\supset G'$, we have
$\ell(G)>\ell(G')$. Similarly,
for any pair $H$, $H'$ of different bounded gaps 
of $\sigma_c\cap\Sigma$
such that 
$[\omega^+(H)]\supset H'$, we have
$\ell(H)>\ell(H').$
\end{lemma}
\begin{proof}
Let $G$, $G'$ be bounded gaps of $\gamma_c\cap\Gamma$ as in the first assertion of the lemma. 
 We have $t_{G}<t_{G'}$.
Using \eqref{dist-c-eq1} in Proposition~\ref{dist-c} and
$\ell(f^{-t_{G}}(G'))\leq \lambda_{c}^{t_{G'}-t_G}\ell(f^{-t_{G'}}(G'))$ which follows from \eqref{cone-eq2} in Proposition~\ref{cone-cor}(a), we have
\[\frac{\ell(G)}{\ell(G')}\geq K^{-1}\frac{\ell(f^{-t_{G}}(G))}{\ell(f^{-t_{G}}(G'))}\geq K^{-1}
\lambda_{c}^{-t_{G'}+t_{G}}\frac{\ell(f^{-t_{G}}(G))}{\ell(f^{-t_{G'}}(G'))}.\]
Applying Lemma~\ref{gap-com} 
to the last fraction we 
  obtain
\[\frac{\ell(G)}{\ell(G')}\geq\frac{K^{-1}}{\lambda_{c} }\frac{1-\theta}{1+\theta}\frac{|E_c^*|-2|A_c^*| +   \kappa_1|A_c^*| }{|E_c^*|-2|A_c^*|+ \kappa_1|A_c^*| }.\]
Since $K^{-1}\lambda_c^{-1}>1$,
the right-hand side is strictly larger than $1$.
  We have verified the first assertion of Lemma~\ref{thick-new}. A proof of the second one is analogous.
\end{proof}

Returning to the proof of Proposition~\ref{core},
we only give a proof of the first inequality 
 since that of the second one is analogous.
Let $G$ be a bounded gap of $\gamma_c\cap\Gamma$ and let $x\in\partial G$.
With no loss of generality we may assume $f^{-t_G}(x)\in f(A)$.
Let $I$ denote the bridge of $\gamma_c\cap\Gamma$
at $x$. Let $\tilde I$ denote the connected component of 
$[\omega^-(G)]\setminus([ f(A)\omega^-(G)]\cup [ f(D)\omega^-(G)])$ that contains $x$.
Let $I'$ denote the minimal curve in $\gamma_c$ that contains $x$ and all 
gaps of $\gamma_c\cap\Gamma$ contained in $[f(A)\omega^-(G)]$.
Clearly we have $I'\subset \tilde I$. Since $f^{-t_G}(\tilde I)$ is tangent to $\mathcal C^c(\theta)$, we have
\begin{equation}\label{eq101}\left|\frac{\ell(f^{-t_G}(\tilde I))}{|A_c^*|} -1\right|\leq\theta.\end{equation}
A similar reasoning to the proof of Lemma~\ref{gap-com} shows that
\begin{equation}\label{eq102}\frac{\ell(f^{-t_G}(\tilde I\setminus I'))}{\kappa_1|A_c^*|}\leq 1+2\theta.\end{equation}
Using \eqref{eq101}, \eqref{eq102} and $I'\subset I$ which follows from
Lemma~\ref{thick-new} and the definition of bridge in Section~\ref{thick}, we obtain 
\[\begin{split}\frac{\ell(I)}{\ell(G)}\geq\frac{\ell(I')}{\ell(G)}\geq K^{-1}
\frac{\ell(f^{-t_G}(I'))}{\ell(f^{-t_G}(G))}\geq K^{-1}
\frac{(1-\theta)|A_c^*|-(1+2\theta)\kappa_1 |A_c^*|}{(1+\theta)(|E_c^*|-2|A_c^*|)+(1+2\theta) \kappa_1|A_c^*| }.
\end{split}\]
If $K-1$ and $\theta$ are sufficiently small, the first inequality in Proposition~\ref{core} holds.
\end{proof}

\subsection{Lipschitz continuity of holonomy maps}\label{lip-s}
For a pair $\gamma$, $\gamma'$ of $cu$-admissible curves, 
we write $\gamma\sim\gamma'$ if
$\Pi^u(\gamma\cap \mathcal F^u(\Gamma))=\Pi^u(\gamma'\cap \mathcal F^u(\Gamma))$.
If $\gamma\sim\gamma'$,
define $\Pi^u_{\gamma\gamma'}\colon
 \gamma\cap \mathcal F^u(\Gamma)\to \gamma'\cap \mathcal F^u(\Gamma)$
 by $\Pi^u_{\gamma\gamma'}(p)\in\gamma'\cap\mathcal F^u(p)$.
 Note that $\Pi^u_{\gamma\gamma'}$
 is invertible and the inverse is 
 $\Pi^u_{\gamma'\gamma}$.
Similarly, for a pair $\sigma$, $\sigma'$ of $cs$-admissible curves,
   we write $\sigma\sim\sigma'$
if $\Pi^s(\sigma\cap \mathcal F^s(\Sigma))=\Pi^s(\sigma'\cap \mathcal F^s(\Sigma))$. If $\sigma\sim\sigma'$,
define $\Pi^s_{\sigma\sigma'}\colon
 \sigma\cap \mathcal F^s(\Sigma)\to \sigma'\cap \mathcal F^s(\Sigma)$
 by $\Pi^s_{\sigma\sigma'}(q)\in\sigma'\cap\mathcal F^s(q)$.

The next proposition asserts that
the above maps
are bi-Lipschitz continuous, and the Lipschitz constants can be made arbitrarily close to $1$ by taking $f$ which is sufficiently $C^2$-close to $f_0$. 

\begin{prop}\label{lip}
  For any $K\in(1,2)$ there exists $\theta\in(0,1/10)$ such that if $f\in{\rm Diff}^2(\mathbb R^3)$ is sufficiently $C^2$-close to $f_0$, then for any pair  $\gamma$, $\gamma'$ of
 $cu$-admissible curves with
$\gamma\sim\gamma'$ we have
    \[\sup_{\stackrel{p,q\in \gamma\cap \mathcal F^u(\Gamma)}{p\neq q}}
    \frac{d(\Pi^u_{\gamma\gamma'}(p),\Pi^u_{\gamma\gamma'}(q))}{d(p,q)}\leq K^4.\]
 Similarly, for any pair $\sigma$, $\sigma'$ of two $cs$-admissible curves with
$\sigma\sim\sigma'$, 
  \[\sup_{\stackrel{p,q\in \gamma\cap \mathcal F^s(\Sigma)}{p\neq q}}
    \frac{d(\Pi^s_{\sigma\sigma'}(p),\Pi^s_{\sigma\sigma'}(q))}{d(p,q)}\leq K^4.\]
\end{prop}
A proof of Proposition~\ref{lip} is given in Section~\ref{pf2}.

\subsection{Lower bounds on thicknesses the Cantor sets}\label{section-lip}
We now obtain the following lower bounds on the thicknesses of $\Omega_1$
and $\Omega_2$ in \eqref{omega12}.
\begin{prop}\label{as-eq}
If $\delta>0$ and $f\in{\rm Diff}^2(\mathbb R^3)$ is 
sufficiently $C^2$-close to $f_0$,
then
\[\tau(\Omega_1)\geq a_1-\delta\quad\text{and}\quad
\tau(\Omega_2)\geq a_2-\delta.\]
\end{prop}
\begin{proof}
Let $G$ be a bounded gap of $\Omega_1$ 
and let $x\in\partial G$. Let $I$ denote the bridge of $\Omega_1$  at $x$.  
 Let $G'$ denote the bounded gap of $\gamma_c\cap\Gamma$ such that  $\partial G=\Pi^u(\partial G')$.
 Let $I'$
 denote the bridge of $\gamma_c\cap \Gamma$ at $y\in\partial G'$ such that $\Pi^u(y)=x$. By
Propositions~\ref{core} and \ref{lip},
\[\tau(\Omega_1,x)=\frac{\ell(I)}{\ell(G)}\geq K^{-8}\frac{1}{(1+\theta)^2}
\frac{\ell( I')}{\ell(G')}\geq K^{-8}\frac{1}{(1+\theta)^2}\left(a_1-\frac{\delta}{2}\right)>a_1-\delta.\]
Since $G$ is an arbitrary bounded gap of $\Omega_1$ and $x$ is an arbitrary point of $\partial G$, we obtain
$\tau(\Omega_1)\geq a_1-\delta$.
A proof of the second inequality is analogous. 
\end{proof}

\subsection{Proof of Theorem~A}\label{pf-thma}
From Proposition~\ref{as-eq} and Lemma~\ref{gaplem}
together with the assumption $a_1a_2>1$,
 $\mathscr{H}$ is non-empty.
The upper bounds on $\dim_{\rm H} W^u(\Gamma)$ and $\dim_{\rm H} W^s(\Sigma)$ follow from Proposition~\ref{lip}.
The lower bounds are consequences
of Propositions~\ref{lip}, \ref{as-eq} and the result of Newhouse
 \cite{New79} (see also \cite[p.77, Proposition~5]{PalTak93}) which asserts that
the Hausdorff dimension of a Cantor set in $\mathbb R$ with thickness $\tau$ is at least $\log2/\log(2+1/\tau)$.

We claim that
there exist countably many $C^1$ curves $\gamma_1,\gamma_2,\ldots$ in
$V^{cu}\cap\bigcup_{n=0}^{\infty} f^{-n}(V^{cs})$ which are
 tangent to $\mathcal C^{c}(\theta)$
and satisfy
$\mathscr{H}\subset \bigcup_{k=1}^\infty\bigcup_{n=1}^{\infty}f^{n}(\gamma_k\cap 
\mathcal F^u(\Gamma))$.
Propositions~\ref{tangent} and \ref{cone-cor} together imply that
 each $\gamma_k\cap \mathcal F^u(\Gamma)$ is covered by 
$2^n$ curves in $\gamma_k$ of length at most $\lambda_c^{n}$, for all $n\geq1$, and hence
$\dim_{\rm H}(\gamma_k\cap \mathcal F^u(\Gamma))\leq-
\log2/\log\lambda_c<1$.
By the countable stability 
and the invariance of Hausdorff dimension under 
bi-Lipschitz homeomorphisms, we obtain
$\dim_{\rm H} \mathscr{H}\leq-
\log2/\log
\lambda_c<1$. 
In particular, $\mathscr{H}$ is totally disconnected \cite[Proposition~2.5]{F}. 

It is left to prove the claim.
For each $p\in \mathscr{H}$,
there are integers $n_0$, $n_1$ with $n_0< 0\leq n_1$ such that $f^{n}(p)\in V^{cu}\cap \mathcal F^u(\Gamma)$ for $n\leq n_0$
and $f^{n}(p)\in V^{cs}\cap \mathcal \mathcal F^s(\Sigma)$ for $n\geq n_1$.
Set $N=n_1-n_0$.
By Proposition~\ref{cone-cor}, 
$f^{N}(V^{cu})$ intersects $V^{cs}$ transversely at $f^{n_1}(p)$,
and so $f^{N}(V^{cu})\cap V^{cs}$ contains a curve which is tangent to  $\mathcal C^{c}(\theta)$ and contains
$f^{n_1}(p)$.
  We apply Proposition~\ref{cone-cor} to $f^{-N}$ to obtain a curve
in
$V^{cu}\cap f^{-N}(V^{cs})$ which is tangent to $\mathcal C^{c}(\theta)$
and contains $f^{n_0}(p)$.
The proof of the claim is complete.\qed

\subsection{Proof of Theorem~C}\label{pf-thmb}
In view of Theorem~\ref{cant-inter},
 we fix $T_0>1$ such that if $S_1$, $S_2$
are two interleaved Cantor sets in $\mathbb R$ with $\tau(S_1)> T_0/2$, $\tau(S_2)> T_0/2$ then $S_1\cap S_2$ contains a Cantor set whose thickness is at least
$(6/7)\sqrt{\min\{\tau(S_1),\tau(S_2)\}}$.
Assume $\min\{a_1,a_2\}>T_0$.
By Proposition~\ref{as-eq},
we obtain $\tau(\Omega_1)>T_0/2$ and $\tau(\Omega_2)>T_0/2$
for all $f$ that is sufficiently $C^2$-close to $f_0$.
Then \[\tau(\Omega_1\cap \Omega_2)\geq\frac{6}{7}\sqrt{\min\{\tau(\Omega_1),\tau(\Omega_2)\}}>\frac{5}{6}\sqrt{\min\{a_1,a_2\}}.\]
 Using Newhouse's lower bound as in the proof of Theorem~A, we obtain
\[\begin{split}
\frac{1}{\dim_{\rm H} \mathscr{H}}-1&<
\frac{1}{2\log2}\frac{1}{\tau(\Omega_1\cap \Omega_2)}
<
\frac{1}{\sqrt{\min\{a_1,a_2\}}},
\end{split}\]
as required.\qed

\subsection{Proof of Proposition~\ref{dist-c}}\label{pf1}
We consider iterations of planar
 maps $\varphi_1\colon \pi_1(R_1)\to\pi_1(R_1)$
 and $\varphi_2\colon \pi_2(R_2)\to\pi_2(R_2)$ given by
\begin{equation}\label{varphieq}\varphi_1=\pi_1\circ f^{-1}\circ(\pi_1|_{V^{cu}})^{-1}
\quad\text{and}\quad
\varphi_2=\pi_2\circ f\circ(\pi_2|_{V^{cs}})^{-1}.\end{equation}
Recell that
$V^{cu}$, $V^{cs}$ are 
$C^2$ surfaces, and so
  $\varphi_1$, $\varphi_2$ are of class $C^2$. 
Let $J\varphi_1$, $J\varphi_2$ denote the Jacobian matrices of $\varphi_1$, $\varphi_2$
with respect to the canonical basis. Write
\[J\varphi_1=\begin{pmatrix} \Phi_{uu} & \Phi_{uc}\\\Phi_{cu}&\Phi_{cc} \end{pmatrix} \quad\text{and}\quad J\varphi_2=\begin{pmatrix} \Phi_{cc} & \Phi_{cs}\\\Phi_{sc}&\Phi_{ss} \end{pmatrix},\]
and put
\[J^2\varphi_1=\begin{pmatrix} \nabla_1\Phi_{uu} & \nabla_1\Phi_{uc}\\ \nabla_1\Phi_{cu}&\nabla_1\Phi_{cc} \end{pmatrix} \quad\text{and}\quad J^2\varphi_2=\begin{pmatrix} \nabla_2\Phi_{cc} & \nabla_2\Phi_{cs}\\ \nabla_2\Phi_{sc}&\nabla_2\Phi_{ss} \end{pmatrix},\]
where
$\nabla_1=(\partial_{x_u},\partial_{x_c})$
and $\nabla_2=(\partial_{x_c},\partial_{x_s})$.
Replacing $f$ if necessary
we may assume
\begin{equation}\label{deriv}
\sup_{\pi_1(R_1)}\|J\varphi_1\|
<(1+\theta)\lambda_{0,c}^{-1}
\quad\text{and}\quad
\sup_{\pi_2(R_2)}\| J\varphi_2\|<(1+\theta)\nu_{0,c},\end{equation}
\begin{equation}\label{g-der}
\sup_{\pi_1(R_1)}|\det J\varphi_1|
<(1+\theta)\lambda_{0,u}^{-1}\lambda_{0,c}^{-1}
\quad\text{and}\quad
\sup_{\pi_2(R_2)}|\det J\varphi_2|<(1+\theta)\nu_{0,c}\nu_{0,s},\end{equation}
\begin{equation}\label{2-del}
    \sup_{\pi_1(R_1)}\|J^2\varphi_1\|\leq \frac{\theta\lambda_{0,c}}{1+\theta}\quad\text{and}\quad\sup_{\pi_2(R_2)}\|J^2\varphi_2\|\leq\frac{\theta\nu_{0,c}^{-1}}{1+\theta}.\end{equation}

Let $n\geq1$ and let $\gamma$ be a $cu$-admissible curve as in Proposition~\ref{dist-c}.
From the assumption $\gamma\subset\bigcap_{i=0}^{n} f^{i}(V^{cu})$ and Proposition~\ref{tangent}, the curve
$f^{-i}(\gamma)$ is tangent 
to $\mathcal C^{c}(\theta)$ for $0\leq i\leq n$.
To estimate curvatures, we parametrize 
$\gamma_0=\pi_1(\gamma)$ by arc length $s$
and put $\gamma_{i+1}(s)=\varphi_1\circ\gamma_i(s)$ inductively for $i=0,\ldots,n-1$. 
Then
\[\label{curv-form}\dot{\gamma}_{i+1}(s)=M_i(s)\dot\gamma_{i}(s)\quad
\text{and}\quad
\ddot{\gamma}_{i+1}(s)=N_i(s)\dot\gamma_{i}(s)+
M_i(s)\ddot\gamma_{i}(s),\]
where
 \[M_i(s)=J\varphi_1(\gamma_{i}(s))\quad\text{and}\quad N_i(s)=\begin{pmatrix}\langle \nabla \Phi_{uu},\dot{\gamma}_{i}(s) \rangle & \langle \nabla \Phi_{uc},\dot{\gamma}_{i}(s) \rangle\\
\langle \nabla \Phi_{cu},\dot{\gamma}_{i}(s) \rangle& \langle \nabla \Phi_{cc},\dot{\gamma}_{i}(s) \rangle\end{pmatrix},\]
and the single and double dots denote the first- and the second-order derivatives on $s$ respectively, and 
$\langle\cdot,\cdot\rangle$ denotes the inner product.
Let $\kappa_{i}(s)$ denote the curvature of $\gamma_{i}$ at $\gamma_{i}(s)$.
We have
\begin{equation}\label{curvature-ineq}
\begin{split}
    \kappa_{i+1}(s)&
    \leq\frac{\|M_{i}(s)\dot\gamma_{i}(s)\times
    N_i(s)\dot\gamma_{i}(s)\|+|\det M_i(s)|\cdot\|\dot{\gamma}_i(s)\times\ddot{\gamma}_i(s)\|}
    {\|\dot{\gamma}_{i+1}(s)\|^3}\\
    &\leq\frac{\|\dot{\gamma}_{i}(s)\|^3}{\|\dot{\gamma}_{i+1}(s)\|^3}\left(
\|M_{i}(s) \|
    \|N_i(s)\|\frac{1}{\|\dot{\gamma}_{i}(s)\|}+
    |\det M_i(s)|\kappa_i(s)\right)\\
    &\leq\frac{\theta}{8}+\frac{1+\theta}{8}\kappa_i(s).
\end{split}
\end{equation}
By virtue of \eqref{deriv}, \eqref{g-der}, \eqref{2-del},
the last inequality in \eqref{curvature-ineq} holds if $\lambda_{c}-\lambda_c'$ is sufficiently small.
  Using \eqref{curvature-ineq} inductively and combining the result with $\kappa_0(s)\leq\theta$ yields
\begin{equation}\label{kappan}\kappa_{n}(s)\leq \frac{\theta}{8}\sum_{i=0}^{n-1}
\left(\frac{1+\theta}{8}\right)^{i}+\left(\frac{1+\theta}{8}\right)^{n}\kappa_0(s)<
\theta.\end{equation}
Since $\pi_1(f^{-n}(\gamma(s)))=\gamma_n(s)$, from \eqref{kappan} it follows that 
$f^{-n}(\gamma)$ is a $cu$-admissible curve as required in Proposition~\ref{dist-c}.

Let $0\leq i\leq n-1$.
For all parameter values $s$, $t$ we have
\[
    \begin{split}
\left|\frac{\|\dot{\gamma}_{i+1}(s)\|}{\|\dot{\gamma}_{i}(s)\|}-
\frac{\|\dot{\gamma}_{i+1}(t)\|}{\|\dot{\gamma}_{i}(t)\|}\right|
&\leq\theta\|M_i(t)\| |\gamma_i(s)-\gamma_i(t)|+\|M_i(s)-M_i(t)\|\\
&\leq\sqrt{\theta}|\gamma_i(s)-\gamma_i(t)|\leq 2\sqrt{\theta}{\lambda_c'}^{n-i},
\end{split}
\]
where $|\gamma_i(s)-\gamma_i(t)|$ denotes the Euclidean distance between $\gamma_i(s)$
and $\gamma_i(t)$.
We have used \eqref{kappan} for the first inequality, 
\eqref{g-der} and \eqref{2-del} for the second one.
The last inequality follows from \eqref{cone-eq2} in Proposition~\ref{cone-cor}.
If $\theta<((1/4)\log K)^2$ then
\[\begin{split} \log\frac{\|\dot{\gamma}_{n}(s)\|}{\|\dot{\gamma}_{n}(t)\|}\leq 
    \sum_{i=0}^{n-1}\left|\frac{\|\dot{\gamma}_{i+1}(s)\|}{\|\dot{\gamma}_{i}(s)\|}-
\frac{\|\dot{\gamma}_{i+1}(t)\|}{\|\dot{\gamma}_{i}(t)\|}\right|
<  2\sqrt{\theta}<\frac{1}{2}\log K.\end{split}\]
Since
$f^{-n}(\gamma)=(\pi_1|_{V^{cu}})^{-1} \circ \varphi_1^n\circ\pi_1(\gamma)$ and
 the curves $\gamma$, $f^{-n}(\gamma)$ are tangent to $\mathcal C^{cu}(\theta)$, we obtain
 \eqref{dist-c-eq1}
  if
 $\theta$ is sufficiently small.
 Exchanging the roles of
$\varphi_1$, $\varphi_2$ and proceeding in the same way we obtain
\eqref{dist-c-eq2}.\qed

\subsection{Proof of Proposition~\ref{lip}}\label{pf2}
Let $f$ be sufficiently $C^2$-close to $f_0$ such that
for any pair $\gamma$, $\gamma'$ of $cu$-admissible curves 
with $\gamma\sim\gamma'$,
\begin{equation}\label{pf2-eq}
\sup_{\stackrel{p,q\in\gamma\cap \mathcal F^u(\Gamma)}{p\neq q}}\frac{d(f^{-n(p,q)}(\Pi^u_{\gamma\gamma'}(p)),f^{-n(p,q)}(\Pi^u_{\gamma\gamma'}( q)))}{d(f^{-n(p,q)}(p),f^{-n(p,q)}(q))}\leq K.\end{equation}
Here, $n(p,q)$ denotes the non-negative minimal integer such that $f^{-n(p,q)}(p)$ and 
$f^{-n(p,q)}(q)$ are not contained in the same connected component of $f(V^{cu})$.
The first inclusion in \eqref{include-f} implies that $n(p,q)$ makes sense.

 Let $\gamma$, $\gamma'$ be $cu$-admissible curves such that
$\gamma\sim\gamma'$.
Let $p,q\in\gamma\cap \mathcal F^u(\Gamma)$, $p\neq q$ and put $n=n(p,q)$.
Note that $\gamma\cup\gamma'\in\bigcap_{k=0}^n f^k(R_1)$.
From \eqref{pf2-eq} and the mean value theorem, there exist $\xi\in\gamma$ 
and $\eta\in\gamma'$ 
such that
\[\frac{d(\Pi^u_{\gamma\gamma'}(p),\Pi^u_{\gamma\gamma'}( q))}
{d(p,q)}
\leq K
\frac{\|Tf^{-n}|_{T_\xi\gamma}\|}{\|Tf^{-n}|_{
T_\eta\gamma'}\|}.\]
Put $p'= 
\Pi^u_{\gamma\gamma'}(p)$. 
The chain rule and \eqref{dist-c-eq1} together imply
  \[\frac{\|Tf^{-n}|_{T_\xi\gamma}\|}{\|Tf^{-n}|_{
T_\eta\gamma'}\|}=\frac{\|Tf^{-n}|_{T_\xi\gamma}\|}{\|Tf^{-n}|_{
T_p\gamma}\|}\frac{\|Tf^{-n}|_{T_p\gamma}\|}{\|Tf^{-n}|_{
T_{p'}\gamma'}\|}\frac{\|Tf^{-n}|_{T_{p'}\gamma'}\|}{\|Tf^{-n}|_{
T_\eta\gamma'}\|}
\leq K^3\frac{\|Tf^{-n}|_{T_p\gamma}\|}{\|Tf^{-n}|_{
T_{p'}\gamma'}\|}.\]
Hence,
for the proof of the first inequality in Proposition~\ref{lip} it suffices to show
\begin{equation}\label{dist-c'-eq2}\frac{\|Tf^{-n}|_{T_p\gamma}\|}{\|Tf^{-n}|_{
T_{p'}\gamma'}\|}\leq
K.\end{equation}

To show \eqref{dist-c'-eq2},
we use the map $\varphi_1$ in \eqref{varphieq}.
Let $v_0$, $v_0'$ be unit vectors at $\pi_1(p_0)$ and $\pi_1(p_0')$ which are tangent to $\pi_1(\gamma)$ and $\pi_1(\gamma')$ respectively. 
Put
$M_i=J_{\pi_1(p_{-i}) }\varphi_1$, $M_i'=J_{\pi_1(p_{-i}') }\varphi_1$
and $v_{i+1}=M_iv_{i}$, $v_i'=M_i'v_{i}'$ inductively for $i=0,\ldots,n-1$,
 where $p_{-i}=f^{-i}(p)$ and $p_{-i}'=f^{-i}(p')$.
Put $s_i=\arccos\langle v_i,v_i'\rangle$.
We set $c_0=\lambda_c^2\lambda_{0,c}^{-1}$ and
 assume $\lambda_c-\lambda_c'$ is sufficiently small so that $c_0<1$.

For $0\leq i\leq n-1$ we have
\begin{equation}\label{si-eq}
\left|\frac{\|v_{i+1}\|}{\|v_{i}\|}-\frac{\|v_{i+1}'\|}{\|v_{i}'\|}\right|\leq 
\theta |p_{-i}-p_{-i}'|
+\|M_i'\|s_i\leq  \frac{\theta}{2^{i-1}}+(1+\theta)\lambda_{0,c}^{-1}s_i,\end{equation}
and if $i\geq1$ then
\begin{equation}\label{si-eq2}
\begin{split}
s_i&=
\frac{\|M_{i-1}v_{i-1}\times M_{i-1}v_{i-1}'+M_{i-1}v_{i-1}\times(M_{i-1}'-M_{i-1} )v_{i-1}'\|}{\|v_{i}\|\|v_{i}'\|}
\\
&\leq\frac{\|v_{i-1}\|}{\|v_{i}\|}\frac{\|v_{i-1}'\|}{\|v_{i}'\|}
(|\det M_{i-1} |s_{i-1}+ \theta 
|p_{-i+1}-p_{-i+1}'|)\\
&\leq \frac{\|v_{i-1}\|}{\|v_{i}\|}\frac{\|v_{i-1}'\|}{\|v_{i}'\|}((1+\theta)\lambda_{0,u}^{-1}\lambda_{0,c}^{-1}s_{i-1}+2\theta\lambda_u^{-i+1})\\
&\leq c_0(1+\theta)\lambda_{0,u}^{-1}s_{i-1}+\frac{\theta}{2}\lambda_u^{-i+1}\leq\frac{1+\theta}{2}s_{i-1}+\frac{\theta}{2^{i}}.
\end{split}
\end{equation}
We have used
\eqref{2-del}
to estimate the second cross product in \eqref{si-eq2}. We have used
\eqref{g-der}, and
\eqref{cone-eq1}, \eqref{cone-eq2}
in Proposition~\ref{cone-cor} to deduce 
$|p_{-i+1}-p_{-i+1}'|\leq2\lambda_u^{-i+1}$,
$\|v_{i}\|\geq2\|v_{i-1}\|$ and
$\|v_{i}'\|\geq2\|v_{i-1}'\|$ for
 the second and third inequalities in \eqref{si-eq2}.
Using \eqref{si-eq2} inductively and then
 $s_0\leq 2\theta$
yields
\begin{equation}\label{si}s_{i}\leq 2\theta \left(\frac{1+\theta}{2}\right)^{i}+\frac{\theta}{2^{i}}\sum_{k=0}^{i-1}\left(\frac{1+\theta}{2}\right)^k\leq2\theta \left(\frac{1+\theta}{2}\right)^{i}+\frac{3\theta}{2^{i}},\end{equation}
for $0\leq i\leq n-1$.
Plugging \eqref{si} into \eqref{si-eq} and 
summing the result yields
\[\log\frac{\|v_{n}\|}{\|v'_{n}\|}
\leq  \sum_{i=0}^{n-1}\left|\frac{\|v_{i+1}\|}{\|v_{i}\|}-\frac{\|v_{i+1}'\|}{\|v_i'\|}\right|
\leq \frac{1}{2}\log K.\]
Since
$f^{-n}(\gamma)=(\pi_1|_{V^{cu}})^{-1} \circ \varphi_1^n\circ\pi_1(\gamma)$, the curves
$\gamma$, $f^{-n}(\gamma)$ are tangent to $\mathcal C^{cu}(\theta)$, and the same for $\gamma'$, we obtain
\eqref{dist-c'-eq2}.
Exchanging the roles of $\varphi_1$, $\varphi_2$ and proceeding in the same way,
we obtain the second inequality in Proposition~\ref{lip}. \qed

\section*{Appendix}
Under additional conditions on the heterochaos horseshoe map $f_0$, one can obtain $C^1$-robust heterodimensional cycles. The following was pointed to us by Asaoka
in a personal communication.

Let $f_0\in{\rm Diff}^1(\mathbb R^3)$ be a 
heterochaos horseshoe map as in Section~\ref{cou-sec}.
We assume:
\begin{itemize}
\item[(i)] $(|A_c^*|/|A_c|)(|B_c|/|B_c^*|)<1$.
\item[(ii)] $A_c=D_c \subset {\rm int} (f_0^2(A\cap f_0^{-1}(B))_c) \cup {\rm int}(f_0^2(D\cap f_0^{-1}(C))_c)$,
where ${\rm int}(\cdot)$ denotes the interior operation.
\end{itemize}
The restriction of $f_0^2$ to 
$(A\cap f_0^{-1}(B))\cup (D\cap f_0^{-1}(C))$
is essentially the same as the affine model generating a blender explained in
\cite[Section~6.2.1]{BDV04} (see also
\cite[Example~2.10]{Asa22}).
Condition (i) implies that the restriction of $f_0^2$ to $A\cap f_0^{-1}(B)$ and the restriction of $f_0^2$
to $D\cap f_0^{-1}(C)$ are uniformly contracting in the $x_c$-direction (recall \eqref{constant1}).
Condition (ii) implies the so-called ``overlapping condition''.
It is easy to check that (i) (ii) are compatible with the assumption $a_1a_2>1$
in Theorems~A and B.
The set
\[\Psi=\bigcap_{n=-\infty}^\infty f_0^{-2n}((A\cap f_0^{-1}(B)) \cup (D\cap f_0^{-1}(C)))\]
is a hyperbolic set for $f_0^2$ of index $1$, and it is a $d_{cu}$-unstable blender.
Let $Q$ denote the fixed saddle of $f_0$ in $C$.
  Then $f_0$ has a $C^1$-robust heterodimensional cycle
 associated to $\Psi\cup f_0(\Psi)$ and $\{Q\}$.
The hyperbolic set 
$\Psi\cup f_0(\Psi)$ for $f_0$ is disjoint from another hyperbolic set $\Gamma(f_0)$ for $f_0$.

\subsection*{Acknowledgments}
We thank anonymous referees for their useful comments, and Masayuki Asaoka for fruitful discussions.
This research was supported by the JSPS KAKENHI 19KK0067, 19K21835, 20H01811 and JPJSBP 120229913.


\begin{thebibliography}{10}

\bibitem{Asa08} M. Asaoka, 
{\it Hyperbolic sets exhibiting $C^1$-persistent homoclinic tangency 
for higher dimensions},
Proc. Amer. Math. Soc. 136 (2008), pp. 677--686.

\bibitem{Asa22} 
M. Asaoka, {\it Stable intersection of Cantor sets
in higher dimension and robust homoclinic tangency of the largest codimension}, Trans. Amer. Math. Soc. 375 (2022), pp. 873--908.

\bibitem{ACW21} A. Avila,  
S. Crovisier, and A. Wilkinson, 
{\it $C^1$ density of stable ergodicity},  Adv. Math. 379 (2021), 107496.

\bibitem{BarPer19} P. G. Barrientos and S. A. P\'erez, {\it Robust heteroclinic tangencies,}
Bull. Braz. Math. Soc. 51 (2020), pp. 1041--1056.

\bibitem{BCDW16}
    C. Bonatti, S. Crovisier, 
     L. J. D\'iaz, and A. Wilkinson, 
     {\it What is ... a blender?},
     Notices of the AMS 63 (2016), pp. 1175--1178.
      
  \bibitem{BonDia96}   C, Bonatti and L. J. D\'iaz, 
  {\it Persistent nonhyperbolic transitive diffeomorphisms}, 
  Ann. of Math. 143 (1996), pp. 357--396.

\bibitem{BonDia08}C. Bonatti and L. J. D\'iaz,  
{\it Robust heterodimensional cycles and $C^1$-generic dynamics}, J. Inst. Math. Jussieu 7 (2008), pp. 469--525.


 \bibitem{BDV04} C. Bonatti, L. J. D\'iaz, and M. Viana, 
      {\it Dynamics beyond uniform hyperbolicity}, Encyclopaedia of Mathematical Sciences,
      102 (2004) (Berlin:Springer)

 \bibitem{DGGJ19} L. J. D\'iaz, 
 K. Gelfert, M. Gr\"oger, and T. J\"ager, 
 {\it Hyperbolic graphs: critical regularity and box dimension},
 Trans. Amer. Math. Soc. 371
(2019), pp. 8535--8585.

 \bibitem{F} K. Falconer,  
  {\it Fractal Geometry (Mathematical foundations and applications).} Second edition. Wiley, Chichester, 2003.

\bibitem{Fen72} N. Fenichel, {\it Persistence and smoothness of invariant manifolds for flows},
Indiana Univ. Math. J. 21 (1971/1972), pp. 193--226.

\bibitem{GTS93}S. V. Gonchenko,  L. P. Shil'nikov, and
D. V. Turaev, {\it On the existence of Newhouse regions in a neighborhood of systems with a structurally unstable homoclinic Poincar\'e curve (the multidimensional case)}, Russ. Acad. Sci. Dokl. Math. 47 (1993), pp. 268--273.

\bibitem{HPS77} M. W. Hirsch, C. C. Pugh, and M. Shub, {\it Invariant manifolds},
Lecture Notes in Mathematics 583 Springer-Verlag, Berlin-New York,
1977. 



\bibitem{HKY93} B. R. Hunt, I. Kan, and J. A. Yorke, 
{\it When Cantor sets intersect thickly},
Trans. Amer. Math. Soc. 339 (1993), pp. 869--888. 

\bibitem{McMan83}
H. McCluskey and A. Manning, {\it Hausdorff dimension for horseshoes}, Ergodic Theory Dynamical Systems 3 (1983), pp. 251--260.

\bibitem{Mor11}
C. G. Moreira, {\it There are no $C^1$ stable intersections
of regular Cantor sets}, Acta Math. 206 (2011), pp. 311--323.

\bibitem{MorYoc01}
C. G. Moreira and J.-C. Yoccoz, {\it Stable intersections of regular Cantor sets with large Hausdorff dimensions}, Ann. of Math. 154 (2001), pp. 45--96. 

\bibitem{New70} S. E. Newhouse, {\it Nondensity of Axiom A(a) on $S^2$}, 
1970 Global Analysis (Proc. Sympos. Pure Math., Vol. XIV, Berkeley, Calif., 1968) pp. 191–202 Amer. Math. Soc., Providence, R.I. 57.48

\bibitem{New74} S. E. Newhouse, 
{\it Diffeomorphisms with infinitely many sinks}, Topology 13 (1974), 
pp. 9--18.


\bibitem{New79} S. E. Newhouse, {\it The abundance of wild hyperbolic sets and non-smooth stable sets
for diffeomorphisms}, Inst. Hautes \'Etudes Sci. Publ. Math. 
50 (1979), pp. 101--151.

\bibitem{PalTak93} J. Palis and F. Takens, {\it Hyperbolicity and sensitive chaotic dynamics at
homoclinic bifurcations}.
Cambridge studies in advanced mathematics 35,
Cambridge University Press 1993.

\bibitem{PalVia88} J. Palis and  M. Viana, {\it On the continuity
of Hausdorff dimension and limit capacity for horseshoes}, 
Dynamical Systems, Valparaiso (1986), pp. 150--160, Lecture Notes in Math. 1331 Springer, Berlin, 1988.
\bibitem{PalVia} J. Palis and  M. Viana, 
{\it High dimension diffeomorphisms displaying
infinitely many periodic attractors}, Ann. of Math. 140 (1994),
pp. 207--250.

\bibitem{Rob83}  C. Robinson,  {\it Bifurcation to infinitely many sinks},
Commun. Math. Phys. 90 (1983), pp. 433--459.

\bibitem{Rom95} N. Romero, {\it Persistence of homoclinic tangencies in higher dimensions},
Ergodic Theory and Dynamical Systems 15 (1995), pp. 735--757.

\bibitem{STY20} Y. Saiki, H. Takahasi, and J. A. Yorke, 
{\it Piecewise linear maps with heterogeneous chaos}, 
Nonlinearity 34 (2021), pp. 5744--5761.

\bibitem{SS99} K. Simon, B. Solomyak, 
{\it Hausdorff dimension for horseshoes in $\mathbb R^3$},
Ergodic Theory Dynamical Systems 19 (1999), pp. 1343--1363.

\bibitem{Tak86} F. Takens, {\it Limit capacity and Hausdorff dimension of dynamically defined Cantor sets}, Dynamical Systems, Valparaiso (1986), pp. 196--212, Lecture Notes in Math. 1331 Springer, Berlin, 1988.

\bibitem{Ure95}
R. Ures, {\it Abundance of hyperbolicity
in the $C^1$ topology}, Ann. Sci. \'Ecole Norm. Sup. 28 (1995), pp. 747--760.  

\bibitem{Wil91} R. F. Williams, 
{\it How big is the intersection of two thick Cantor sets?}, Continuum Theory and Dynamical Systems 
(Arcata, CA, 1989), pp. 163--175, Contemp. Math., 117 Amer. Math. Soc., Providence, RI, 1991.
\end{thebibliography}
\end{document}